\documentclass[11pt]{article}
\usepackage[T1]{fontenc}
\usepackage{lmodern}
\usepackage[margin=1in]{geometry}
\usepackage{amsmath,amssymb,amsthm,mathtools}
\usepackage{booktabs,array,longtable}
\usepackage[hidelinks]{hyperref}
\usepackage{microtype}
\hypersetup{pdftitle={A proof of the Berkovich--Dhar conjecture modulo three},
 pdfauthor={Shutao Jiang},
 pdfsubject={Coefficient signs in powers of the finite Borwein product},
 pdfkeywords={Borwein polynomial, coefficient signs, saddle point method}}
\allowdisplaybreaks
\numberwithin{equation}{section}
\newtheorem{theorem}{Theorem}[section]
\newtheorem{lemma}[theorem]{Lemma}
\newtheorem{proposition}[theorem]{Proposition}
\newtheorem{corollary}[theorem]{Corollary}
\theoremstyle{remark}

\newcommand{\ii}{\mathrm i}
\newcommand{\e}{\mathrm e}
\newcommand{\Rea}{\operatorname{Re}}
\newcommand{\Ima}{\operatorname{Im}}
\newcommand{\Li}{\operatorname{Li}_2}
\newcommand{\G}{\mathcal G}
\newcommand{\M}{\mathcal M}
\newcommand{\sgn}{\operatorname{sgn}}
\newcommand{\ind}{\mathbf 1}
\title{A proof of the Berkovich--Dhar conjecture modulo three}
\author{Shutao Jiang\\[3pt]
 {\small East China Normal University}\\
 {\small\href{mailto:51290147024@stu.ecnu.edu.cn}{\texttt{51290147024@stu.ecnu.edu.cn}}}}
\date{}
\begin{document}
\maketitle
\begin{abstract}
Let $P_n(q)=\prod_{j=1}^n(1-q^{3j-2})(1-q^{3j-1})$.
For each $p\in\{4,5,6,7,8\}$, we prove that the coefficients of
$P_n(q)^p$ in the residue class $0$ modulo $3$ are nonnegative,
and that those in residue class $2$, after zero terms are omitted,
change sign exactly once, from positive to negative. The former
coefficients are strictly positive for $5\le p\le8$.
These results establish the sign assertions of the Berkovich--Dhar
conjecture modulo three. We also determine the limiting transition
constants and a four-term expansion for the transition centres.
For $n\ge301$, the signs in residue class $2$ are determined outside an interval of
length $2\varepsilon_p/n^2$ centred at
$\alpha_p n^2+\beta_p n+\gamma_p+\delta_p/n$, where
$\varepsilon_4=1500$ and $\varepsilon_p=150$ for $5\le p\le8$.
The constants have explicit analytic definitions. A correction to the
saddle relation removes the first phase error, a higher-order expansion
resolves the cancellation of the two dominant contributions, and a
rescaled positive integral controls the small-degree range uniformly.
\end{abstract}
\noindent\textbf{2020 Mathematics Subject Classification.} 05A16, 05A17, 05A30.\\
\textbf{Keywords.} Borwein polynomial, coefficient signs, sign change,
saddle point method.

\section{Introduction and main results}
For a positive integer $n$, put
\begin{equation}\label{eq:product}
 P_n(q)=\prod_{j=1}^n(1-q^{3j-2})(1-q^{3j-1}),\qquad
 c_d^{(p)}(n)=[q^d]P_n(q)^p,\qquad
 z_{p,n,m}=c_{3m+2}^{(p)}(n).
\end{equation}
The classical Borwein conjectures concern the signs of coefficients
in powers of this finite product; see Andrews~\cite{Andrews}.
Wang~\cite{Wang} proved the first conjecture analytically.
Wang and Krattenthaler~\cite{WK} subsequently developed a saddle point
approach that proves the first two Borwein conjectures and gives
partial results for the cubic case. Related questions for positive
real powers of the infinite Borwein product were studied by Schlosser
and Zhou~\cite{SZ}, who obtained coefficient asymptotics and sign
results by the circle method.

For the powers $4\le p\le8$, Berkovich and Dhar
\cite[Conjecture 2.1]{BD} proposed a different pattern. The coefficients
in residue class $0$ modulo $3$ should remain nonnegative, whereas those
in residue class $2$ should pass once from nonnegative to nonpositive.
They further conjectured a limiting position for this transition after
division by $n^2$. We prove these assertions for all five powers and
give a more precise localization. The degree of $P_n^p$ is $3pn^2$;
thus $0\le m\le pn^2-1$ is the relevant range for $z_{p,n,m}$.

\begin{theorem}\label{thm:main}
Let $p\in\{4,5,6,7,8\}$ and $n\ge1$. Then
\begin{equation}\label{eq:r0}
 c_{3m}^{(p)}(n)\ge0\qquad(0\le m\le pn^2),
\end{equation}
with strict inequality if $5\le p\le8$. The sequence
$(z_{p,n,m})_{m=0}^{pn^2-1}$ has exactly one sign change, from positive
to negative, when zero terms are omitted.

Define
\begin{equation}\label{eq:T}
 T_{p,n}=\alpha_p n^2+\beta_p n+\gamma_p+\delta_p/n,
 \qquad
 \varepsilon_p=\begin{cases}1500,&p=4,\\150,&5\le p\le8,
 \end{cases}
\end{equation}
where the constants are defined in Section~\ref{sec:constants}.
For $n\ge301$ and every integer $m$ in the relevant range,
\begin{align}
 m<T_{p,n}-\varepsilon_p/n^2&\ \Longrightarrow\ z_{p,n,m}>0,
 \label{eq:signplus}\\
 m>T_{p,n}+\varepsilon_p/n^2&\ \Longrightarrow\ z_{p,n,m}<0.
 \label{eq:signminus}
\end{align}
There is at most one zero in this sequence for $n\ge301$.
\end{theorem}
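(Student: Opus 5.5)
The plan is to follow the Wang--Krattenthaler saddle point method and split $P_n(q)^p$ by residue classes. With $\omega=\e^{2\pi\ii/3}$ we have
\[
\sum_m c^{(p)}_{3m+r}(n)q^{3m+r}=\tfrac13\sum_{k=0}^{2}\omega^{-kr}P_n(\omega^kq)^p .
\]
Each coefficient is then written as a Cauchy integral on a circle $|q|=\rho$, where $\rho=\rho(m/n^2)$ is chosen as the saddle of $P_n(q)^pq^{-3m}$ restricted to the relevant class. Concretely, we write $q=\e^{-t/n}$-type variables and approximate
\[
\log P_n(q)=\sum_j\log(1-q^{3j-2})(1-q^{3j-1})
\]
by Euler--Maclaurin. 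This yields dilogarithm main terms $\Li$ together with explicit lower-order corrections. For residue class $0$ the contributions from $q$ near $\rho$ and near $\rho\omega^{\pm1}$ all carry positive weight. We plan to show the dominant one is a positive Gaussian-type integral whose size exceeds the remaining arcs exponentially. This gives \eqref{eq:r0}, with strictness for $p\ge5$ because the leading term does not degenerate there. For $p=4$ we expect genuine zeros at the endpoints, which we would check directly.

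For residue class $2$ the two conjugate saddles near $\rho\omega^{\pm1}$ contribute
\[
2\Rea\bigl(\omega^{\mp2}A(m)\e^{\ii\theta(m)}\bigr),
\]
where the phase $\theta$ varies slowly in $m$. The sign is therefore governed by $\cos(\theta(m)+\phi)$. We will show that on the relevant range the phase crosses exactly one zero of the cosine. The crossing point defines the transition. Expanding $\theta$ in powers of $1/n$ and solving $\theta(T)+\phi=\pi/2$ yields $T_{p,n}=\alpha_pn^2+\beta_pn+\gamma_p+\delta_p/n$ with the constants of Section~\ref{sec:constants}. The steps, in order, are as follows. First, a corrected saddle relation that removes the $O(1/n)$ phase error. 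Second, a higher-order expansion, taken to $O(n^{-4})$ relative accuracy in the phase, to handle the near-cancellation of the two dominant terms close to $T$. Third, an explicit derivative bound $\theta'(m)\asymp n^{-2}$, which converts a phase error into the window $\varepsilon_p/n^2$ and so gives \eqref{eq:signplus} and \eqref{eq:signminus}. Since $\varepsilon_p/n^2<1/2$, at most one integer lies in the window. Monotonicity of sign on either side of the window then gives at most one zero and a single sign change.

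Away from the centre, meaning small $m$ (roughly $m\le cn$) and $m$ near $pn^2$, the saddle escapes to the boundary. There we use a rescaled positive integral in which the contribution near $q=\rho$ dominates uniformly. Using the symmetry $q^{3pn^2}P_n(1/q)^p=P_n(q)^p$, which maps class $2$ to class $1$ reflected, we handle the top end from the bottom end of the complementary class. Finally, all $n\le300$ are covered by direct computer expansion of $P_n^p$, which verifies the sign pattern and strict positivity.

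The main obstacle is the second step: obtaining rigorous, fully explicit error bounds in the cancellation region. There the leading amplitude of class $2$ is only as large as the derivative times the distance to $T$, so every error term must be below $n^{-2}$ times the main term. My first attempt would be to write the integrand exactly as a product of its Euler--Maclaurin main part and a remainder bounded via Bernoulli estimates on the contour. I would then bound the tail of the Gaussian expansion with explicit constants, tuning the threshold $n\ge301$ and $\varepsilon_4=1500$ (where $p=4$ is weaker) to make the inequalities close.
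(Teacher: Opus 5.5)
You have the paper's architecture in the critical part: the corrected relation $d=-pn^2F'(t)+nD(t)$, a fixed-saddle expansion at $t_p$ with an $O(n^{-4})$ error against the main term $-\kappa_pv/n^2$, reciprocity, and a machine check for $n\le300$. The genuine gap is in the small-degree range, which rests on a false premise.

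\textbf{The small-degree range.} You say the contribution near $q=\rho$ dominates there. It never does. For $0<x<1$ one has $|1-\omega x|^2=1+x+x^2>1$, so $|P_n(\rho\omega)|>1>P_n(\rho)$. After rescaling by $N=n/t$, the positive-axis neighbourhood is exponentially small in $N$ relative to the arcs at $\theta=\pm2\pi/3$. This is what Lemma~\ref{lem:positive-axis} proves, and that neighbourhood enters only as the error term $T_{5,p}$. A dominant positive contribution from $q=\rho$ would also force all coefficients in that range to share one sign. In fact residue~$1$ is negative and residue~$2$ positive there; already $c_1^{(p)}(n)=-p$ and $c_2^{(p)}(n)=p(p-3)/2$. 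Negativity of residue~$1$ in the lower half is exactly what your reciprocity step needs.

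The correct mechanism keeps the two conjugate arcs dominant throughout. Write $G_t(w)=tF(tw)=c/w+R_t(w)$ with $c=\pi^2/27$. The rescaled exponent is compared with the explicit reference $K_{p,\infty}$, whose symmetric integral is real and bounded below. The sign is then that of $\cos(p\psi(t)-2\pi j/3)$, with $\psi(t)$ close to $-\pi/18$.

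The same misreading affects your residue-$0$ paragraph. Positivity there is the phase condition $\cos(p\psi(t))\ge\cos(p\pi/18)>0$, not a consequence of positive filter weights. This is precisely where $p\le8$ enters; compare $[q^3]P_n^p=p^2-\binom p3<0$ for $p\ge9$.

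\textbf{Bounded degrees for large $n$.} No asymptotic estimate is uniform down to bounded degree, since $N$ stays bounded there. So for $n\ge301$ you also need a finite computation. The paper checks the stabilized coefficients of $\prod_{3\nmid k}(1-q^k)^p$ below degree $900$ ($p=4$) or $1800$ ($p\ge5$). The cutoffs are chosen via $d<pcN^2$, so that above them the edge estimates with $N\ge124/5$ apply. Your machine check covers only $n\le300$.

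\textbf{Bridging to the moving-saddle range.} This passage is not automatic. The $n^{-4}$ error constant at the fixed saddle grows with $|v|=|m-T_{p,n}|$. The moving-saddle estimate, with error $E_p/n^2$, only resolves $|m-T_{p,n}|>130$. The paper bridges the two with a chain of overlapping windows $|v|\le B$.

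\textbf{The exception $p=4$.} Your explanation is wrong. The leading residue-$0$ factor satisfies $\cos(4\psi)\ge\cos(2\pi/9)$, so it does not degenerate, and the extreme coefficients are $c_0=c_{3pn^2}=1$, not zero. The paper gets nonnegativity for $p=4$ from the Second Borwein Theorem: $P_n^2=A(q^3)-qB(q^3)-q^2C(q^3)$ with $A,B,C$ having nonnegative coefficients. The residue-$0$ part of the square is then $A(x)^2+2xB(x)C(x)$. An analytic treatment of $p=4$ as you propose could also work, but not for the reason you give.
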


The constants in the theorem are displayed in Table~\ref{tab:constants}.
Their exact definitions, rather than the displayed decimal
approximations, will be used throughout.
\begin{table}[ht]
\centering\small
\begin{tabular}{crrrr}
\toprule
$p$&$\alpha_p$&$\beta_p$&$\gamma_p$&$\delta_p$\\\midrule
4&0.74908009478851&0.08814225705231&$-0.45919776536944$&0.05471364270799\\
5&1.26928368492984&0.05575166361208&$-0.45281773147117$&0.05611656220711\\
6&1.77728903391998&0.04091445985236&$-0.44340353651555$&0.05676868123636\\
7&2.28142027960530&0.03244327609517&$-0.43629285259280$&0.05474636409158\\
8&2.78386833123840&0.02696867636641&$-0.43118823661271$&0.05163946199434\\
\bottomrule
\end{tabular}
\caption{The transition constants, rounded for display.}\label{tab:constants}
\end{table}

\begin{corollary}\label{cor:BD}
For $4\le p\le8$, let $K_{p,n}$ be the largest index with
$z_{p,n,K_{p,n}}>0$. Then
\[
 K_{p,n}=\alpha_p n^2+\beta_p n+O(1),\qquad
 \frac{K_{p,n}}{n^2}\longrightarrow\alpha_p.
\]
In particular, the sign assertions and the existence of limiting
transition constants in \cite[Conjecture 2.1]{BD} hold for all five powers.
\end{corollary}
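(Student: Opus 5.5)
The corollary follows from Theorem~\ref{thm:main} once large $n$ is separated from small $n$. For $n\ge301$ the localization \eqref{eq:signplus}--\eqref{eq:signminus} pins down $K_{p,n}$. Since $\varepsilon_p/n^2\le1500/301^2<1/60$, the window $[T_{p,n}-\varepsilon_p/n^2,\,T_{p,n}+\varepsilon_p/n^2]$ has length less than $1$, so it contains at most one integer. Every integer $m<T_{p,n}-\varepsilon_p/n^2$ has $z_{p,n,m}>0$, and every integer $m>T_{p,n}+\varepsilon_p/n^2$ has $z_{p,n,m}<0$.

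We still need the relevant range to straddle the window, that is, $0\le T_{p,n}-1$ and $T_{p,n}+1\le pn^2-1$. This follows from $0<\alpha_p<p$ and the table values, because $T_{p,n}\sim\alpha_p n^2$ with $\alpha_p$ strictly between $0$ and $p$. It is certainly true for $n\ge301$, since $\alpha_p n^2\ge 0.749\cdot 301^2$ and $(p-\alpha_p)n^2$ is also huge. Hence the integer $\lfloor T_{p,n}-\varepsilon_p/n^2\rfloor$ lies in range and carries a positive coefficient. Every integer beyond $\lceil T_{p,n}+\varepsilon_p/n^2\rceil$ carries a negative coefficient, with the possible exception of the single integer inside the window, and that integer may have either sign or be zero. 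Therefore $K_{p,n}$ is one of at most two consecutive integers within distance $1$ of $T_{p,n}$, so
\[
 |K_{p,n}-T_{p,n}|\le 1+\varepsilon_p/n^2\le 2 .
\]
Since $T_{p,n}=\alpha_pn^2+\beta_pn+\gamma_p+\delta_p/n$ and $|\gamma_p+\delta_p/n|$ is bounded, this gives $K_{p,n}=\alpha_pn^2+\beta_pn+O(1)$ uniformly for $n\ge301$.

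For $1\le n\le300$ there are finitely many values. On this range $K_{p,n}$ is well defined, because $z_{p,n,0}=[q^2]P_n^p$ is positive: the coefficient of $q^2$ in $P_n$ is positive via $(1-q)^p$, and in fact the first sign being positive is part of Theorem~\ref{thm:main}. We also have $0\le K_{p,n}\le pn^2$. So $|K_{p,n}-\alpha_pn^2-\beta_pn|$ is bounded by a constant over $n\le300$, and these finitely many terms are absorbed into the $O(1)$. Dividing by $n^2$ gives $K_{p,n}/n^2\to\alpha_p$.

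For the final assertion about \cite[Conjecture 2.1]{BD}, the sign statements are exactly \eqref{eq:r0} and the single sign change from positive to negative in Theorem~\ref{thm:main}. The limiting transition constant is $\alpha_p=\lim K_{p,n}/n^2$, shown above. There is no real obstacle here. The only point requiring care is checking that the window has length less than $1$ and lies inside the relevant range for $n\ge301$, and both are immediate from the table and the values of $\varepsilon_p$.
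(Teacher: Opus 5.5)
Your proposal is correct and follows the paper's route. The paper's proof simply observes that the localization in Theorem~\ref{thm:main} gives $K_{p,n}=T_{p,n}+O(1)$; you make this explicit as $|K_{p,n}-T_{p,n}|<1+\varepsilon_p/n^2$ for $n\ge301$, absorb the finitely many smaller $n$ into the constant, and should add, as the paper does, that reciprocity $c_d^{(p)}(n)=c_{3pn^2-d}^{(p)}(n)$ gives the mirrored residue-$1$ pattern in the conjecture's sign assertions. One slip: $[q^2]P_n=-1$, so positivity of $z_{p,n,0}=\binom p2-p=p(p-3)/2$ needs the $(1-q^2)^p$ contribution too, though your fallback to the theorem's positive-to-negative sign pattern already ensures that $K_{p,n}$ exists.
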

\begin{proof}
Theorem~\ref{thm:main} gives $K_{p,n}=T_{p,n}+O(1)$.
Reciprocity, $c_d^{(p)}(n)=c_{3pn^2-d}^{(p)}(n)$, determines the
corresponding pattern in residue class $1$.
\end{proof}

The analytic values in Table~\ref{tab:constants} replace the preliminary
numerical estimates in \cite[Conjecture 2.1]{BD}. The interval in
\eqref{eq:signplus}--\eqref{eq:signminus} describes the transition before
rounding to an integer index. If it contains no integer, then
$K_{p,n}=\lfloor T_{p,n}\rfloor$; otherwise the sign of its one possible
integer term determines the last positive index.

The range of powers has a simple obstruction beyond its upper endpoint.
For every $n\ge1$,
\begin{equation}\label{eq:p9}
 [q^3]P_n(q)^p=p^2-\binom p3=\frac{p(-p^2+9p-2)}6.
\end{equation}
This is negative for every integer $p\ge9$, so the nonnegativity
assertion in residue class $0$ cannot extend to these powers.

The main analytic difficulty is a cancellation between the contributions
of two conjugate arcs. Their leading phase vanishes at an interior saddle
parameter, and absolute-value estimates alone do not determine the
coefficient signs near it. We first remove the imaginary part of the
first correction by changing the degree--saddle relation. Near the
transition we use a fixed saddle and expand through order $n^{-3}$.
Away from the compact saddle range we rescale the parameter, ultimately
comparing the major arcs with a positive real integral. The estimates
overlap in degree and leave only finite coefficient calculations.
The shared expansions and remainder bounds are developed once below;
the dependence on $p$ is kept explicit. Every terminating decimal in
an inequality denotes the corresponding rational number.

\section{The dominant arcs and transition constants}\label{sec:constants}
Put $\omega=\e^{2\pi\ii/3}$ and define, initially for real $t$,
\begin{align}
 \ell(t)&=\log(1+\e^{-3t}+\e^{-6t}),&
 F(t)&=\int_0^1\ell(tx)\,dx,\label{eq:F}\\
 \psi(t)&=-\frac\pi{18}+\frac13\arctan
       \frac{\sqrt3\e^{-3t}}{2+\e^{-3t}},\label{eq:psi}\\
 H(t)&=\frac{t(\e^{-6t}-1)}{12(1+\e^{-3t}+\e^{-6t})},&
 J(t)&=-\frac{t^2\psi''(t)}{27},\qquad
 W(t)=\frac{13t^3\ell'''(t)}{19440}.\label{eq:HJW}
\end{align}
Logarithms in the complex neighborhoods used below are continued
from the positive real axis. In particular,
\[
 \ii\psi(z)=-\frac{\ii\pi}{18}
 +\frac{\log(1-\bar\omega\e^{-3z})-\log(1-\omega\e^{-3z})}{6}.
\]
For $\Rea z>0$,
\begin{equation}\label{eq:dilog}
 F(z)=\frac{\pi^2/3+\Li(\e^{-9z})-3\Li(\e^{-3z})}{9z}.
\end{equation}
At zero the singularity is removable, with $F(0)=\log3$ and
$F'(0)=-3/2$. Differentiation gives
\begin{equation}\label{eq:convex}
 F''(t)=9\int_0^1x^2\frac{a+4a^2+a^3}{(1+a+a^2)^2}\,dx>0,
 \qquad a=\e^{-3tx},
\end{equation}
and
$\psi'(t)=-\sqrt3\e^{-3t}/(2(1+\e^{-3t}+\e^{-6t}))<0$.

For a degree $d$ and a parameter $t\ge0$, write
\begin{equation}\label{eq:norm}
 \M_{p,n,d}(t)=\frac{2\exp(pnF(t)+dt/n)}
 {\sqrt{2\pi pn^3F''(t)}},\qquad
 g_{p,j}(t)=\cos(p\psi(t)-2\pi j/3).
\end{equation}
Cauchy's formula on $|q|=\e^{-t/n}$ reads
\begin{equation}\label{eq:cauchy}
 \frac{c_d^{(p)}(n)}{\M_{p,n,d}(t)}=
 \sqrt{\frac{pF''(t)}{8\pi}}n^{3/2}\e^{-pnF(t)}
 \int_{-\pi}^{\pi}P_n(\e^{-t/n+\ii\theta})^p
                 \e^{-\ii d\theta}\,d\theta.
\end{equation}
The arcs about $\theta=\pm2\pi/3$ are conjugate. Their leading
normalized contribution is $g_{p,j}(t)$ for $d\equiv j\pmod3$.
Since $-\pi/18<\psi(t)\le0$ on $t\ge0$, one has
\begin{equation}\label{eq:phases}
 g_{p,0}(t)\ge\cos(p\pi/18)>0.17,\qquad
 g_{p,1}(t)\le-1/2\qquad(4\le p\le8).
\end{equation}
The phase $p\psi(t)+2\pi/3$ lies in $(0,\pi)$ and decreases
strictly. Hence $g_{p,2}$ increases strictly and has a unique zero $t_p$.
Explicitly,
\begin{equation}\label{eq:tstar}
 \phi_p=\frac{(p-3)\pi}{6p},\qquad
 u_p=\frac{2\tan\phi_p}{\sqrt3-\tan\phi_p},\qquad
 t_p=-\frac13\log u_p,\qquad \psi(t_p)=-\frac\pi{6p}.
\end{equation}
The approximate saddle parameters are $0.601565$, $0.424593$,
$0.335018$, $0.278734$, and $0.239480$, in order of increasing $p$.

We now give finite algebraic definitions of the four constants in
\eqref{eq:T}. In the remainder of this section write
$f_r=F^{(r)}(t_p)$ and $\psi_r=\psi^{(r)}(t_p)$.
For indeterminates $D_1,D_0,D_{-1},v$, let
\begin{align}\label{eq:Ar}
 A_r(y;v)={}&p\frac{F^{(r+2)}(t_p)}{(r+2)!}(-\ii y)^{r+2}
 +\ii p\frac{\psi^{(r)}(t_p)}{r!}(-\ii y)^r\notag\\
 &+p\ind_{r\ge2}\frac{H^{(r-2)}(t_p)}{(r-2)!}(-\ii y)^{r-2}
 +\ii p\ind_{r\ge4}\frac{J^{(r-4)}(t_p)}{(r-4)!}(-\ii y)^{r-4}\notag\\
 &+p\ind_{r\ge6}\frac{W^{(r-6)}(t_p)}{(r-6)!}(-\ii y)^{r-6}
 -\ii y\bigl(D_1\ind_{r=1}+(D_0+3v)\ind_{r=3}
                  +D_{-1}\ind_{r=5}\bigr).
\end{align}
Terms with negative derivative indices are omitted. Define
\begin{equation}\label{eq:Br}
 B_0=1,\quad B_r=\frac1r\sum_{k=1}^r kA_kB_{r-k},\qquad
 \G_p(y^{2r})=\frac{(2r-1)!!}{(pf_2)^r},\quad
 \G_p(y^{2r+1})=0,
\end{equation}
with $(-1)!!=1$. At $v=0$, solve successively
\begin{equation}\label{eq:Dsolve}
 \Ima\G_p(B_2)=\Ima\G_p(B_4)=\Ima\G_p(B_6)=0
\end{equation}
for $D_1,D_0,D_{-1}$. Each equation is linear in its new unknown,
with coefficient $-\psi_1/f_2\ne0$. Put
\begin{equation}\label{eq:abcd}
 \alpha_p=-pf_1/3,\qquad \beta_p=D_1/3,\qquad
 \gamma_p=(D_0-2)/3,\qquad\delta_p=D_{-1}/3.
\end{equation}
In particular,
\begin{equation}\label{eq:D1}
 D_1=\frac{f_3}{2f_2}-\frac{\psi_2}{2\psi_1},\qquad
 \alpha_p=p\int_0^1x\frac{u_p^x+2u_p^{2x}}
                              {1+u_p^x+u_p^{2x}}\,dx.
\end{equation}
Finally set
\begin{equation}\label{eq:kappa}
 \kappa_p=-3\psi_1/f_2,\qquad
 \lambda_p=-\bigl(\Ima\G_p(B_6)|_{v=1}
                         -\Ima\G_p(B_6)|_{v=0}\bigr).
\end{equation}
The Gaussian algebra gives
\begin{equation}\label{eq:gausscancel}
 \Ima\G_p(B_2)=0,\quad
 \Ima\G_p(B_4)=\kappa_pv,\quad
 \Ima\G_p(B_6)=-\lambda_pv.
\end{equation}
The last expression is linear in $v$: the only possible quadratic
contribution comes from $A_3^2/2$, whose $v^2$ term is real.
The bounds needed later are
\begin{equation}\label{eq:kappabound}
 \kappa_p-\frac{\max(\lambda_p,0)}{301}>k_p,
 \qquad k_4=3/8,\quad k_p=2/5\ (5\le p\le8).
\end{equation}
\begingroup\small
For orientation, $(\kappa_p,\lambda_p)$ are approximately
$(0.376530,0.061278)$, $(0.406331,0.031743)$,
$(0.416996,0.011375)$, $(0.422185,-0.005950)$, and
$(0.425146,-0.021449)$. Thus \eqref{eq:kappabound} applies uniformly
without imposing a sign on $\lambda_p$.
\par\endgroup

\section{Uniform analytic estimates}\label{sec:method}
\subsection{Euler--Maclaurin expansion and analytic domains}
For $a=1,2$, put $f_a(x;z)=\log(1-\omega^a\e^{-3zx})$.
Apply the shifted Euler--Maclaurin formula to the grids
$(j+1/3)/n$ and $(j+2/3)/n$, $0\le j<n$:
\[
 \sum_{j=0}^{n-1}f((j+b)/n)=n\int_0^1 f(x)\,dx+
 \sum_{k=1}^{s}\frac{B_k(b)}{k!n^{k-1}}
       \bigl(f^{(k-1)}(1)-f^{(k-1)}(0)\bigr)+R_s.
\]
For $s\ge2$, the Fourier expansion of the periodic Bernoulli
function gives
\[
 |R_s|\le\frac{2\zeta(s)}{(2\pi)^s n^{s-1}}
                         \int_0^1|f^{(s)}(x)|\,dx;
\]
see \cite[\S\S2.10, 24.8]{DLMF}. Using $s=5$ and
\[
 \begin{gathered}
 B_1(1/3)=-1/6,\quad B_2(1/3)=-1/18,\quad B_3(1/3)=1/27,\\
 B_4(1/3)=13/810,\quad B_5(1/3)=-5/243,
 \end{gathered}
\]
we obtain
\begin{equation}\label{eq:EM}
 \log P_n(\omega\e^{-z/n})=
 nF(z)+\ii\psi(z)+H(z)/n+\ii J(z)/n^2+W(z)/n^3+\mathcal R_n(z).
\end{equation}
If $\Rea z\ge0$ and $|1-\omega^a\e^{-3zx}|\ge b$ on $0\le x\le1$,
then
\begin{equation}\label{eq:logderivative}
 |\partial_x^k f_a(x;z)|\le
 (k-1)!|3z|^k\e^{-3\Rea z\,x}/b^k\qquad(k\ge1).
\end{equation}
Indeed, the logarithmic derivative is expressed by an Eulerian
polynomial with nonnegative coefficients of sum $(k-1)!$, and
$|\e^{-3zx}|\le1$.

For $z=t-\ii s$, $0\le t\le1$, $|s|\le1/4$, one may take $b=0.97$.
With $A=3\sqrt{t^2+1/16}$, the fifth endpoint term and the remaining
integral yield
\begin{equation}\label{eq:EMcompact}
 n^4|\mathcal R_n(z)|\le C_A(t):=
 \frac{5/243}{120}\frac{24A^4}{0.97^4}
 +\frac{4(1.04)}{6.28^5}\frac{24A^5}{0.97^5}.
\end{equation}
For $z=t(1-\ii v)$, $N=n/t$, retaining the exponential in
\eqref{eq:logderivative} and bounding its integral by $1/(3t)$ gives
\begin{align}\label{eq:EMscaled}
 |\mathcal R_n(z)|&\le C_*(v,b)N^{-4},\notag\\
 C_*(v,b)&=\frac{5/243}{120}\frac{24[3\sqrt{1+v^2}]^4}{b^4}
 +\frac{4(1.04)}{6.28^5}
                  \frac{24[3\sqrt{1+v^2}]^5}{3b^5}.
\end{align}
The pairs $(|v|,b)=(0.6,0.83),(1,0.65),(3,0.35)$ are valid.
To see this, split $w=tx$ at $0.6,0.4,0.15$, respectively.
On the first part use the angular distance from $1$; on the second
use $1-\e^{-3w}$. For example, the middle pair follows from
$\sin(2\pi/3-1.2)>0.65$ and $1-\e^{-1.2}>0.65$.
Together with $\pi>3.14$ and $\zeta(5)<1.04$, these comparisons
give the constants above. In a $p$th power the exponent error in
\eqref{eq:EMcompact} or \eqref{eq:EMscaled} is multiplied by $p$.

For later Cauchy estimates, let $t\in[t_-,t_+]\subset[0,1]$,
$z_+=t_++1/2$, $\rho=\e^{-3(t_--1/2)}$, and $b_*=0.55$.
On all disks $|z-t|\le1/2$ in this parameter interval, valid bounds are
\begin{align}\label{eq:compactdisks}
 M_F&=10,&M_\psi&=2,&
 M_H&=\frac{z_+(1+\rho^2)}{12b_*^2},\notag\\
 M_J&=\frac{3\sqrt3z_+^2\rho(1+\rho^2)}{54b_*^4},&
 M_W&=\frac{13\cdot54z_+^3\rho(1+\rho)}{19440b_*^3}.
\end{align}
Some of these disks cross the imaginary axis, but the angular distance
of $\omega^a\e^{-3zx}$ from the positive real axis is at least
$2\pi/3-3/2$ whenever it could approach $1$. Thus
$|1-\omega^a\e^{-3zx}|\ge\sin(2\pi/3-3/2)>0.55$.
The factors avoid the logarithmic cut, and
\[
 |\log(1-\omega^a\e^{-3zx})|\le
 \sqrt{\max\{\log(1+\e^{3/2}),-\log0.55\}^2+\pi^2}.
\]
This proves the bounds for $F,\psi$; the rational formulas in
\eqref{eq:HJW} prove those for $H,J,W$.

\subsection{A modulus bound and two product inequalities}
\begin{lemma}\label{lem:modulus}
For $t\ge0$ and $n\ge1$,
$|P_n(\omega\e^{-t/n})|\le\e^{nF(t)}$.
\end{lemma}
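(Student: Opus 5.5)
The plan is to compute the modulus exactly as a product of real factors and then compare a two-point symmetric sum with an integral, cell by cell, using convexity of $\ell$. Put $q=\e^{-t/n}$. Since $\omega^{3j-2}=\omega$ and $\omega^{3j-1}=\omega^2=\bar\omega$,
\[
 P_n(\omega q)=\prod_{j=1}^n(1-\omega q^{3j-2})(1-\bar\omega q^{3j-1}).
\]
For real $x\ge0$ one has $|1-\omega x|^2=|1-\bar\omega x|^2=1+x+x^2$. Writing $q^{3j-2}=\e^{-3t(j-2/3)/n}$ and $q^{3j-1}=\e^{-3t(j-1/3)/n}$, this gives the exact identity
\[
 \log|P_n(\omega\e^{-t/n})|
 =\frac12\sum_{j=1}^n\Bigl[\ell\bigl(t(j-\tfrac23)/n\bigr)+\ell\bigl(t(j-\tfrac13)/n\bigr)\Bigr].
\]
Meanwhile $nF(t)=\sum_{j=1}^n n\int_{(j-1)/n}^{j/n}\ell(tx)\,dx$. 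So it suffices to prove, for each $j$, that the average of $g(x)=\ell(tx)$ at the two points $c\pm\frac1{6n}$ is at most the mean of $g$ over the cell $[c-\frac1{2n},c+\frac1{2n}]$, where $c=(j-\frac12)/n$. The case $t=0$ is trivial, since both sides equal $n\log3$.

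The key input is convexity of $\ell$ on $[0,\infty)$. With $a=\e^{-3w}$, a direct computation gives
\[
 \ell''(w)=\frac{9(a+4a^2+a^3)}{(1+a+a^2)^2}>0,
\]
which is exactly the integrand in \eqref{eq:convex}. Hence $g(x)=\ell(tx)$ is convex for $t>0$.

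For the cell comparison, rescale so the cell is $[-\frac12,\frac12]$, and set $h(y)=g(c+y/n)+g(c-y/n)$. Then $h$ is even and convex, so $h'(y)\ge0$ for $y\ge0$. The tangent line at $y_0=\frac16$ gives $h(y)\ge h(\frac16)+h'(\frac16)(y-\frac16)$. Averaging over $y\in[0,\frac12]$, the mean of $y-\frac16$ is $\frac14-\frac16>0$, so
\[
 2\int_0^{1/2}h(y)\,dy\ge h(\tfrac16).
\]
The left side is $2n\int_{\mathrm{cell}}g$ and the right side is the two-point sum, which is exactly the required inequality. Summing over $j$ and exponentiating proves the lemma.

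I expect no real obstacle. The only subtle point is that plain Jensen or midpoint convexity is not enough here, because the sample points $\pm\frac16$ lie strictly inside the cell. The tangent-line argument applied to the symmetrized function $h$ handles this, and it uses both that $h$ is nondecreasing in $|y|$ and that $\frac16<\frac14$, the mean of $|y|$ over the cell.
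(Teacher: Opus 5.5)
Your proof is correct. Its skeleton is the same as the paper's: the exact identity expressing $\log|P_n(\omega\e^{-t/n})|$ as half the sum of $\ell$ at the one-third and two-third points of each cell, convexity of $\ell$, and a cell-by-cell comparison of that two-node rule with the integral.

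The difference is in how you prove the quadrature inequality $\tfrac12\bigl(f(1/3)+f(2/3)\bigr)\le\int_0^1 f$ for convex $f$.
\begin{itemize}
\item The paper checks it on affine functions and on the hinge functions $(x-a)_+$. There the defect is an explicit nonnegative kernel: $a^2/2$, then $(a-\tfrac12)^2/2+\tfrac1{24}$, then $(1-a)^2/2$ on the three ranges of $a$. It then integrates against the curvature measure of $f$.
\item You symmetrize about the cell midpoint and take the supporting line of the even convex function $h$ at $y=\tfrac16$. You then use $h'(\tfrac16)\ge0$ and the fact that $\tfrac16$ is less than $\tfrac14$, the mean of $y$ over $[0,\tfrac12]$.
\end{itemize}

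Your route is shorter and needs no integral representation of a convex function by its second-derivative measure. Since $\ell$ is smooth here, both arguments are fully rigorous. The paper's route additionally exhibits the exact Peano kernel, so it quantifies the slack in the inequality, although that information is not used later in the paper.
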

\begin{proof}
For every convex $f$ on $[0,1]$,
\[
 \frac{f(1/3)+f(2/3)}2\le\int_0^1 f(x)\,dx.
\]
It suffices to check affine functions and $(x-a)_+$, $0\le a\le1$.
For the latter, the difference between the right and left sides is
$a^2/2$ for $a\le1/3$, $(a-1/2)^2/2+1/24$ for
$1/3\le a\le2/3$, and $(1-a)^2/2$ for $a\ge2/3$.
Integration against the curvature measure proves the assertion.
Apply it to the convex function $\ell(tx)$ on each of the $n$
subintervals, noting that
\[
 \log|P_n(\omega\e^{-t/n})|=
 \frac12\sum_{j=0}^{n-1}\bigl(\ell(t(j+1/3)/n)
                             +\ell(t(j+2/3)/n)\bigr).
\]
\end{proof}

Let $q=r\e^{\ii\theta}$, $0<r<1$, and
$\mathcal L=-\log|P_n(q)/P_n(r\omega)|$.
The base-product estimates of \cite[Lemmas 9.1 and 9.4]{WK} are
\begin{align}
 \mathcal L&\ge\frac13\sum_{k=1}^nr^{3k}(1-\cos3k\theta)
                           -\frac{0.8}{|1-q|},\label{eq:WK1}\\
 \mathcal L&\ge\frac{(r+r^2)(1-r^{3n})}{6(1-r^3)}-5.44,
               \qquad |1-q|<1/3.\label{eq:WK2}
\end{align}
At $r=1$ we use the limiting inequalities, or continuity of the
coefficient integral. For $P_n^p$, the logarithmic loss is $p\mathcal L$.
Lemma~\ref{lem:modulus} converts these relative bounds into absolute
bounds with normalization \eqref{eq:cauchy}.

\subsection{Finite Gaussian majorants}\label{sec:majorants}
We describe a common remainder estimate for all the saddle expansions.
Let $s=3$ or $7$ be the truncation order and let $f_2>0$ denote
the second derivative of the phase function before multiplication by
$p$. Put $h=n^{-1/2}$, or $h=N^{-1/2}$ after rescaling.
After removing the constant phase and $-pf_2y^2/2$, the exact exponent is
\[
 E=\sum_{r=1}^{16}A_r(y)h^r+\rho,
 \qquad |\rho|\le pC h^8+h^{17}R(|y|).
\]
For a disk of radius $R_0$ and cutoff $|hy|\le a<R_0$, Taylor
expansions of $F,\psi,H,J,W$ through degrees $18,16,14,12,10$ give
\begin{equation}\label{eq:Rpoly}
 R(x)=\frac{p}{1-a/R_0}
 \left(\frac{M_Fx^{19}}{R_0^{19}}+
 \frac{M_\psi x^{17}}{R_0^{17}}+
 \frac{M_Hx^{15}}{R_0^{15}}+
 \frac{M_Jx^{13}}{R_0^{13}}+
 \frac{M_Wx^{11}}{R_0^{11}}\right).
\end{equation}
Let $a_r(x)$ have nonnegative coefficients that bound in modulus the
coefficients of $A_r$ throughout the parameter interval in question.
For $P_s=\sum_{r=1}^s A_rh^r$ and $h\le h_0$, set
\begin{align}
 V_s(x)&=pC h_0^{7-s}+\sum_{r=s+1}^{16}a_r(x)h_0^{r-s-1}
                         +h_0^{16-s}R(x),\label{eq:V}\\
 Q_j(x)&=\sum_{r=j}^s\binom rj a_r(x)h_0^{r-j},\qquad
 U_{s+1}(x)=[w^{s+1}]\exp\left(\sum_{j=1}^sQ_j(x)w^j\right).
 \label{eq:Bell}
\end{align}
The coefficient in \eqref{eq:Bell} is determined by recurrence
\eqref{eq:Br}. Suppose
$\Rea E,\Rea P_s(\xi,y)\le c_0+\eta y^2$ for $0\le\xi\le h$
on the major arc, and $b=pf_2/2-\eta>0$. Then
\begin{equation}\label{eq:expbound}
 \left|\e^E-\sum_{r=0}^sB_rh^r\right|
 \le h^{s+1}\e^{c_0+\eta y^2}
                   \bigl(V_s(|y|)+U_{s+1}(|y|)\bigr).
\end{equation}
Indeed, integration along the segment joining $A$ and $B$ gives
$|\e^A-\e^B|\le|A-B|\e^{\max(\Rea A,\Rea B)}$.
Apply this to $E,P_s$, and then apply Taylor's formula to
$\e^{P_s}$ as a function of $h$. Its derivatives are bounded by
the Bell polynomial in \eqref{eq:Bell}. Controlling real parts here
avoids treating the imaginary cubic term as exponential growth.

The normalized major-arc remainder is consequently bounded by
\begin{equation}\label{eq:majorerror}
 h^{s+1}\sqrt{\frac{pf_2}{2\pi}}\e^{c_0}
 \int_{\mathbb R}\bigl(V_s(|y|)+U_{s+1}(|y|)\bigr)
                                      \e^{-by^2}\,dy.
\end{equation}
Every integral is a finite sum of moments
\begin{equation}\label{eq:moments}
 \int_{\mathbb R}|y|^k\e^{-by^2}\,dy
 =\Gamma((k+1)/2)b^{-(k+1)/2}.
\end{equation}
Odd $B_r$ are odd polynomials and integrate to zero on a symmetric
interval. To extend the remaining Gaussian polynomial integrals to
the real line, use, on $|y|\ge a/h$,
$h^{r-s-1}\le(|y|/a)^{s+1-r}$. If
$I_k(Y)=\int_Y^\infty y^k\e^{-by^2}\,dy$, then
\begin{equation}\label{eq:tailsmom}
 I_0(Y)\le\frac{\e^{-bY^2}}{2bY},\quad
 I_1(Y)=\frac{\e^{-bY^2}}{2b},\quad
 I_k(Y)=\frac{Y^{k-1}\e^{-bY^2}}{2b}+\frac{k-1}{2b}I_{k-2}(Y).
\end{equation}
For precision, let $\overline B_r$ be obtained from the $a_r$ by
the positive recurrence \eqref{eq:Br}, and put
$Z_s(x)=\sum_{0\le r\le s,\ r\text{ even}}
a^{r-s-1}x^{s+1-r}\overline B_r(x)$.
An upper bound for the extension error, divided by $h^{s+1}$, is
$2\sqrt{pf_2/(2\pi)}\sum_k[x^k]Z_s(x)I_k(a/h_0)$, with
$b=pf_2/2$ in these tail moments. This specifies both parts of the
remainder entirely in terms of finite polynomials.

For the compact range, bounds on the real exponent are obtained as
follows. Majorize the absolute values of the real parts of the coefficients of $A_r$ by
$b_{rk}\ge0$. A term $b_{rk}|y|^kh^r$, $k\ge2$, is bounded by
$b_{rk}h_0^{r-k+2}a^{k-2}y^2$; the terms with $k=0,1$ give
$c_0+c_1|y|$. Apply the same estimate to the Cauchy remainder,
add $pCh_0^8$, and use
\begin{equation}\label{eq:linearabsorb}
 c_1|y|\le\epsilon y^2+c_1^2/(4\epsilon),\qquad
 \epsilon=p\inf F''/200.
\end{equation}
This bounds the exact exponent and every intervening $P_s(\xi,y)$.
It also shows why changes to the purely imaginary degree term do not
affect the real-part bound.

\section{The compact range of moving saddles}\label{sec:A}
The first correction can be made real by a degree shift independent
of the power. Define
\begin{equation}\label{eq:moving}
 D(t)=\frac{F'''(t)}{2F''(t)}-\frac{\psi''(t)}{2\psi'(t)},\qquad
 d_{A,p}(t)=-pn^2F'(t)+nD(t),\quad 0\le t\le1.
\end{equation}
The function $D$ extends to zero with $D(0)=0$.
We shall use the critical saddle intervals
\begin{equation}\label{eq:criticalintervals}
 I_4=[0.55,0.65],\qquad I_p=[0.2,0.5]\quad(5\le p\le8).
\end{equation}
They contain $t_p$ in their interiors.

\begin{proposition}\label{prop:A}
For $n\ge301$, $d_{A,p}$ decreases strictly from $3pn^2/2$ to
$d_{A,p}(1)$. If $d=d_{A,p}(t)$ is an integer and $d\equiv j\pmod3$,
then
\begin{equation}\label{eq:Aexp}
 \frac{c_d^{(p)}(n)}{\M_{p,n,d}(t)}
 =g_{p,j}(t)(1+R_{1,p}(t)/n)+\epsilon,
 \qquad |\epsilon|<E_p/n^2.
\end{equation}
The following bounds hold; the last three columns apply on $I_p$.
\begin{center}
\begin{tabular}{crrrrr}
\toprule
$p$&\shortstack{$R_{1,p}$\\lower bound}&$E_p$&\shortstack{improved\\$E_p$}
 &\shortstack{$g'_{p,2}$\\lower bound}&\shortstack{$|((d_{A,p}-2)/3)'|/n^2$\\upper bound}\\\midrule
4&$-3$&300&15&0.42&2\\
5--8&$-5$&50&12&0.6&5\\\bottomrule
\end{tabular}
\end{center}
\end{proposition}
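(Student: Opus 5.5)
The plan is a saddle point evaluation of \eqref{eq:cauchy} at the moving saddle $t$, using $d=d_{A,p}(t)$. The first step is monotonicity. We have $d_{A,p}'(t)=-pn^2F''(t)+nD'(t)$. By \eqref{eq:convex}, $F''\ge\inf_{[0,1]}F''>0$, and a crude bound on $|D'|$ over $[0,1]$ suffices for $n\ge301$. One can obtain that bound from Cauchy estimates with the disk constants \eqref{eq:compactdisks}, after handling the removable singularity of $\psi''/\psi'$; this is harmless because $\psi'<0$ and $\psi'$ is bounded away from zero on $[0,1]$. The endpoint value is $d_{A,p}(0)=-pn^2F'(0)=3pn^2/2$, since $D(0)=0$. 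The last column of the table follows in the same way, as a bound on $(pn^2F''+n|D'|)/(3n^2)$ on $I_p$.

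The circle is split into three pieces:
\begin{itemize}
\item the major arcs $|\theta\mp2\pi/3|\le a\,n^{-1/2}/n\cdot n^{1/2}$, i.e.\ $z=t-\ii s$ with $|s|\le a$ in the scaled variable $q=\omega^{\pm1}\e^{-z/n}$;
\item the complementary arcs, handled by the product inequalities \eqref{eq:WK1}--\eqref{eq:WK2} together with Lemma~\ref{lem:modulus};
\item a neighbourhood of $\theta=0$, which is also handled by \eqref{eq:WK1}--\eqref{eq:WK2}.
\end{itemize}
The point of the last item is that near $q=1$ the factor $1-q$ makes $|P_n|$ small relative to $\e^{nF(t)}$. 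The bound there must be exponentially smaller than $n^{-2}$ times the normalization, and this is where the restriction $n\ge301$ enters numerically. The arc near $\theta=0$ is not dominant because $F$ here is the value at $\omega$; the base-product estimates already encode $|P_n(r)|\ll|P_n(r\omega)|$.

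On the arc near $\omega$, write $y=s\sqrt n$ and use \eqref{eq:EM} with the remainder bound \eqref{eq:EMcompact}. The exponent of $P_n^p\e^{-\ii d\theta}$ then becomes $-pf_2y^2/2+\sum_rA_r(y)h^r+\rho$ with $h=n^{-1/2}$. Here the $A_r$ are exactly \eqref{eq:Ar} with $t$ in place of $t_p$, $D_1=D(t)$, $D_0=D_{-1}=v=0$, and the residue class $j$ contributes the constant phase $-2\pi j/3$.

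The linear term in $y$ cancels because $d$ contains $-pn^2F'(t)$. The $nD(t)$ shift in the degree enters at order $h$ through $-\ii yD_1$. By the choice of $D$ (compare \eqref{eq:D1} and \eqref{eq:gausscancel}), $\G_p(B_2)$ is real, so the relative $1/n$ correction is the real number $R_{1,p}(t)=\G_p(B_2)$. Odd $B_r$ integrate to zero. Adding the conjugate arc at $-2\pi/3$ gives $2\Rea\bigl(\e^{\ii(p\psi-2\pi j/3)}(1+B_2\text{-term}/n)\bigr)$, which equals $g_{p,j}(t)(1+R_{1,p}/n)$ up to the order $n^{-2}$ terms $\G_p(B_4)$.

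The quantity $\G_p(B_4)/n^2$ and the remainder are absorbed into $\epsilon$. To do this, apply \eqref{eq:expbound} with $s=3$ and the majorants \eqref{eq:V}, \eqref{eq:Bell}, and \eqref{eq:Rpoly}, using the Cauchy constants \eqref{eq:compactdisks} on $[0,1]$. The real part is controlled by \eqref{eq:linearabsorb}, and the tails are extended via \eqref{eq:tailsmom}. The lower bound on $R_{1,p}$ is a direct bound on the explicit rational expression in $f_2,f_3,f_4,\psi_1,\psi_2,H$. The improved $E_p$ repeats the same computation on $I_p$ alone, with the sharper constants available there. The bound $g'_{p,2}=-p\psi'\sin(p\psi+2\pi/3)$ on $I_p$ is elementary, since $t_p\in I_p$ makes the sine close to $1$.

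The main obstacle is making the remainder constant $E_p$ uniform over all $t\in[0,1]$, including $t$ near $0$, where $F''$ is smallest and the scale is worst. I would first subdivide $[0,1]$ into short intervals and compute the majorant polynomials $a_r$ with interval arithmetic on each. The constants $300$ and $50$ are chosen loosely enough that this coarse subdivision should suffice.
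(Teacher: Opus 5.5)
\textbf{The gap is in your treatment of the complementary arcs.} Your major-arc analysis matches the paper's: the shift $nD(t)$ makes $\G_{p,t}(B_2)$ real, the $s=3$ majorants are combined with \eqref{eq:linearabsorb} and \eqref{eq:tailsmom}, and $[0,1]$ is subdivided. The problem is that the complementary arcs cannot all be handled by \eqref{eq:WK1}--\eqref{eq:WK2} and Lemma~\ref{lem:modulus}. The estimate fails on the arcs $\theta=\pm2\pi/3+s/n$, $1/4\le|s|\le1$, adjacent to the major arcs.

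There \eqref{eq:WK1} gives the gap
$\tfrac13\sum_k r^{3k}(1-\cos 3ks/n)\approx\tfrac{ns^2}{2}\cdot 3\int_0^1x^2\e^{-3tx}\,dx$.
Its effective curvature is smaller than $F''(t)$ by a factor of roughly $2$ to $3.5$ on $[0,1]$, and you must still subtract the penalty $0.8p/|1-q|\approx0.46p$.
\begin{itemize}
\item For $p=4$, $n=301$, $t\approx t_4$ and $|s|=1/4$, the net exponent is only about $8$. The normalized contribution times $n^2$ is then about $3\eta_4 n^{5/2}\e^{-8}$, i.e.\ several hundred. The improved constant required on $I_4$ is $15$.
\item Near $t=1$ the same estimate gives roughly $7\cdot10^4$, against $E_4=300$.
\end{itemize}
The explicit constants are the content of the proposition, so this step does not go through.

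\textbf{What is missing is a stronger gap on these arcs once $t$ is away from $0$.} The paper uses \eqref{eq:WK1} there only for $t<t_0$, with $t_0=0.35$ for $p=4$ and $t_0=0.2$ otherwise. For $t\ge t_0$ it proves \eqref{eq:compactgap}, $F(t)-\Rea F(t-\ii s)\ge F''(t)\sin^2(3/4)/18$. The proof works directly from the expansion of $|1+a_x\e^{\ii\phi}+a_x^2\e^{2\ii\phi}|^2$, the inequality $-\log(1-X)\ge X$, and \eqref{eq:convex}. This gap is then transferred to $P_n^p$ by a second-order Euler--Maclaurin estimate with an explicit additional real exponent: $1$ for $p=4$, $p/2$ otherwise. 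The weighted inequality \eqref{eq:weighted}, which keeps the true weights, would also suffice on these arcs. At distance at least $1/n$ from the roots, your plan does work: \eqref{eq:WK1} with the geometric-sum gap $\Delta$ of \eqref{eq:compactgaps}, and \eqref{eq:WK2} near $q=1$.

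\textbf{Two smaller corrections.}
\begin{itemize}
\item \emph{Monotonicity.} You need $|D'|<pnF''$, which is about $5\cdot10^2$ at $t=1$, $n=301$, $p=4$. Cauchy estimates from \eqref{eq:compactdisks} give only $|F'''|\le480$, hence a bound on $|D'|$ of order $10^5$, so that route is far too crude. The paper gets $|D'|<10$ from the interval computation. Alternatively, $\psi''/\psi'=-3+3u(1+2u)/(1+u+u^2)$ with $u=\e^{-3t}$ is explicit. Also $\psi'\ne0$ on $[0,1]$, so there is no singularity to remove.
\item \emph{Where the constants bind.} $F''$ decreases from $F''(0)=2$ to about $0.44$ at $t=1$. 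The cells that determine $E_p$ are therefore near $t=1$, where the curvature and all arc gaps are weakest, not near $t=0$.
\end{itemize}
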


\subsection{The first correction and the major-arc remainder}
At a moving saddle, use \eqref{eq:Ar} with derivatives at $t$,
the sole degree correction $D_1=D(t)$, and no $D_0,D_{-1},v$ terms.
Replace $\G_p$ by the Gaussian functional with moments
$(2r-1)!!/(pF''(t))^r$. The imaginary part of its second coefficient is
\begin{equation}\label{eq:phasecorrection}
 -\frac{\psi''}{2F''}+\frac{F'''\psi'}{2(F'')^2}
                         -\frac{\psi'D}{F''}=0.
\end{equation}
Consequently $R_{1,p}=\G_{p,t}(B_2)$ is real. Writing
$f_j=F^{(j)}(t)$ and $\psi_j=\psi^{(j)}(t)$, it is explicitly
\begin{equation}\label{eq:R1explicit}
 R_{1,p}=pH+\frac{p\psi_1^2}{2f_2}
 +\frac{f_4}{8pf_2^2}-\frac{5f_3^2}{24pf_2^3}
 +\frac{f_3D}{2pf_2^2}-\frac{D^2}{2pf_2}.
\end{equation}
This establishes the multiplicative form of the first correction
in \eqref{eq:Aexp}.

For the major-arc error, apply Section~\ref{sec:majorants} with
$s=3$, $h_0=301^{-1/2}$, $a=1/4$, $R_0=1/2$,
$C=C_A(t)$, and the disk bounds \eqref{eq:compactdisks}.
The construction \eqref{eq:linearabsorb} provides $c_0,\eta$ and a
positive $b$. Equations \eqref{eq:majorerror} and \eqref{eq:tailsmom}
then bound the error after the $1/n$ correction, including extension
of the Gaussian polynomial integrals.

\subsection{Complementary arcs in the compact range}\label{sec:compacttails}
We give the full contour estimates, also needed at the fixed critical
saddles. Put $a=1/4$, $r=\e^{-t/n}$, and consider first
$\theta=\pm2\pi/3+s/n$ with $a\le|s|\le1$.
When $t$ is small, the right Riemann sum of $1-\cos(3sx)$ is at
least its integral. Equation~\eqref{eq:WK1} therefore supplies the
logarithmic gap per $n$
\begin{equation}\label{eq:smallgap}
 \mu(t)=\e^{-3t}\bigl(1-\sin(3/4)/(3/4)\bigr)/3.
\end{equation}
The denominator on these arcs exceeds $1.7$ in the ranges where
this estimate is used below.

For the rest of the compact range we use
\begin{equation}\label{eq:compactgap}
 F(t)-\Rea F(t-\ii s)\ge
       F''(t)\sin^2(3/4)/18\qquad(1/4\le|s|\le1).
\end{equation}
Here is a direct proof. For $a_x=\e^{-3tx}$,
\begin{align*}
 |1+a_x\e^{\ii\phi}+a_x^2\e^{2\ii\phi}|^2
 ={}&(1+a_x+a_x^2)^2
 -2a_x(1+a_x^2)(1-\cos\phi)\\
 &-2a_x^2(1-\cos2\phi).
\end{align*}
Apply $-\log(1-X)\ge X$. For $1/4\le s\le1/2$, retain both terms
and use
\[
 \sin(3sx/2)\ge x\sin(3s/2)\ge\tfrac{x}{2}\sin(3/4),\qquad
 \sin(3sx)\ge x\sin(3s)\ge x\sin(3/4).
\]
The resulting integrand is at least
$\tfrac12x^2\sin^2(3/4)
(a_x+4a_x^2+a_x^3)/(1+a_x+a_x^2)^2$.
For $1/2\le s\le1$, retain the first term and use
$\sin(3sx/2)\ge x\sin(3/4)$ and
$a_x(1+a_x^2)/(a_x+4a_x^2+a_x^3)\ge1/3$.
Integration and \eqref{eq:convex} prove \eqref{eq:compactgap};
negative $s$ follows by conjugation.

To pass from this gap to the product, a second-order
Euler--Maclaurin estimate is sufficient. The two choices used are
\begin{center}
\begin{tabular}{crrr}
\toprule
$p$&\shortstack{lower endpoint\\$t_0$}&\shortstack{logarithmic\\denominator bound}
 &\shortstack{additional real exponent of $P_n^p$\\upper bound}\\\midrule
4&0.35&0.39&1\\
5--8&0.2&0.25&$p/2$\\\bottomrule
\end{tabular}
\end{center}
For $p=4$ the bound follows from
\[
 4\left[\frac16\log\frac{1+\e^{-1.05}}{1-\e^{-1.05}}
 +\frac{\sqrt2(1+\e^{-2.10})}{12\cdot301(1-\e^{-1.05})^2}
 +\frac{18}{6\cdot301\cdot0.39^2}\right]<1.
\]
The denominator bound follows by splitting $x$ at $1/2$ and using
the angular separation on the first part and
$1-\e^{-0.525}>0.39$ on the second.
For $5\le p\le8$, the same split gives
$\sin(2\pi/3-1.5)>0.25$ and $1-\e^{-0.3}>0.25$; the base-product
additional exponent is bounded by
\[
 \frac16\log\frac{1+\e^{-0.6}}{1-\e^{-0.6}}
 +\frac{\sqrt2(1+\e^{-1.2})}{12\cdot301(1-\e^{-0.6})^2}
 +\frac{18}{6\cdot301\cdot0.25^2}<1/2.
\]

We now state explicit envelopes for all remaining arcs. Let
$A=3t$, $H_A=(1-\e^{-A})/A$, with $H_0=1$, and put
\begin{equation}\label{eq:compactgaps}
 \begin{split}
 L&=H_A\e^{-A/301},\qquad
 U_A=\frac{1+\e^{-A}}
 {\sqrt{(A\e^{-A/301})^2+4\e^{-A/301}[301\sin(3/602)]^2}},\\
 \Delta&=L-U_A,\qquad
 P_+=\frac{(\e^{-t/301}+\e^{-2t/301})H_A}{6},\qquad
 \eta_p(t)=\sqrt{pF''(t)/(8\pi)}.
 \end{split}
\end{equation}
The geometric sum gives a weighted cosine gap of at least $n\Delta$
at distance at least $1/n$ from every cube root of unity.
Using \eqref{eq:WK1} on $|1-q|\ge1/3$ and \eqref{eq:WK2}
on $|1-q|<1/3$, the normalized complementary-arc error multiplied
by $n^r$ is bounded by the sum
\begin{align}\label{eq:compacttails}
 \mathcal T_{p,r}(t;n)={}&
 4(1-a)\eta_p n^{r+1/2}\e^{b_0-pn\mu}\notag\\
 &+2\pi\eta_p n^{r+3/2}\e^{1+2.4p-pn\Delta/3}
 +2\pi\eta_p n^{r+3/2}\e^{1+5.44p-pnP_+}.
\end{align}
For $t<t_0$, take \eqref{eq:smallgap} and
$b_0=1+0.8p/1.7$. For $t\ge t_0$, take
$\mu=F''(t)\sin^2(3/4)/18$ and $b_0=1$ if $p=4$,
$b_0=p/2$ otherwise. At $t=t_0$ either applicable bound may be used.
The extra factors $\e$ in the far-arc envelopes are harmless.
The neighborhood $|\theta|<1/n$ of the positive axis lies in
$|1-q|<1/3$, so the indicated regions cover the complementary contour.
Each envelope is decreasing for $n\ge301$ whenever its decay
constant times $301$ exceeds the corresponding exponent of $n$.
These strict inequalities hold for $r=2$ over the compact range;
they also hold for $r=4$ at the critical saddles with $p\ge5$.

\subsection{Uniform bounds and the sign away from the transition}
Subdivide $[0,1]$ into the 1000 closed intervals
$[i/1000,(i+1)/1000]$. On each interval use the analytic series in
Appendix~\ref{app:series}, the positive majorants above, and
\eqref{eq:compacttails} at $n=301$. The resulting total constants
in \eqref{eq:Aexp} are bounded as follows:
\begin{center}
\begin{tabular}{crrr}
\toprule
$p$&entire $[0,1]$&critical interval $I_p$&lower bound for $R_{1,p}$\\\midrule
4&232.299&5.242&$-3$\\
5&17.715&5.549&$-0.774$\\
6&9.670&4.265&$-0.786$\\
7&7.601&3.512&$-0.817$\\
8&6.382&3.040&$-0.860$\\\bottomrule
\end{tabular}
\end{center}
The same substitutions give $|D'|<10$, and
$0.43<F''<2.1$ for $p=4$, while $pF''/4>1/2$ suffices for the
other powers. They give the phase and degree derivative bounds
in Proposition~\ref{prop:A} on the cells contained in $I_p$.
In particular, $d_{A,p}'=-pn^2F''+nD'<0$ for $n\ge301$, and
$d_{A,p}(0)=3pn^2/2$. This proves the proposition.

Write
\begin{equation}\label{eq:Sn}
 S_{p,n}=\frac{d_{A,p}(t_p)-2}{3}
 =\alpha_p n^2+\beta_p n-2/3.
\end{equation}
The defining bounds give $|T_{p,n}-S_{p,n}|<1$ for $n\ge301$.
If $|m-T_{p,n}|>130$ and the corresponding saddle lies in $I_p$,
then $|m-S_{p,n}|>129$. The mean value theorem and
Proposition~\ref{prop:A} give a main term with modulus at least
\[
 \frac{0.99\cdot0.42\cdot129}{2n^2}
 =\frac{26.8191}{n^2}>\frac{15}{n^2}\quad(p=4),
\]
or
\[
 \frac{0.98\cdot0.6\cdot129}{5n^2}
 =\frac{15.1704}{n^2}>\frac{12}{n^2}\quad(5\le p\le8).
\]
Its sign is $\sgn(T_{p,n}-m)$. Outside $I_p$, monotonicity and
the endpoint phase values give $|g_{4,2}|>0.02$ or
$|g_{p,2}|>0.05$ for $p\ge5$. These dominate the respective errors
$300/n^2$ and $50/n^2$ after multiplication by the positive amplitude.
Finally, \eqref{eq:phases} shows that every residue-$1$ coefficient
in the compact range is negative and, for $p\ge5$, every residue-$0$
coefficient is positive.

\section{The critical strip}\label{sec:local}
\begin{proposition}\label{prop:local}
Let $n\ge301$ and $v=m-T_{p,n}$. For $|v|\le1$,
\begin{equation}\label{eq:local}
 \left|\frac{z_{p,n,m}}{\M_{p,n,3m+2}(t_p)}
       +\frac{\kappa_p-\lambda_p/n}{n^2}v\right|
 <\frac{C_p}{n^4},\qquad
 C_4=560,\quad C_p=60\ (5\le p\le8).
\end{equation}
Moreover, \eqref{eq:signplus}--\eqref{eq:signminus} hold throughout
$|m-T_{p,n}|\le130$.
\end{proposition}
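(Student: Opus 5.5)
Fix the saddle at $t=t_p$ and set $d=3m+2$. Writing $m=T_{p,n}+v$ gives
\[
d=3\alpha_pn^2+3\beta_pn+3\gamma_p+3\delta_p/n+3v+2=-pn^2f_1+nD_1+(D_0+3v)+D_{-1}/n
\]
by \eqref{eq:abcd}. Hence in Cauchy's formula \eqref{eq:cauchy} on $|q|=\e^{-t_p/n}$, the degree no longer matches the saddle relation exactly. The mismatch produces the linear exponent terms $-\ii y\bigl(D_1h+(D_0+3v)h^3+D_{-1}h^5\bigr)$, with $h=n^{-1/2}$ and $\theta=\pm2\pi/3+hy/n$ on the two conjugate arcs. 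Combining these with the Euler--Maclaurin expansion \eqref{eq:EM} of $p\log P_n$, Taylor-expanded about $t_p$, reproduces exactly the polynomials $A_r(y;v)$ of \eqref{eq:Ar}.

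The two conjugate arcs contribute complex conjugates, multiplied by the phases $\e^{\pm\ii(p\psi(t_p)-4\pi/3)}$. At $t_p$ one has $g_{p,2}(t_p)=0$, so the combined leading factor becomes $2\Rea\bigl(\ii\,\sgn\cdot X\bigr)$, where $X=\G_p\bigl(\sum_r B_rh^r\bigr)$. In other words, only $\Ima\G_p(B_{2k})$ survives, up to the fixed sign determined by $\sin(p\psi(t_p)-4\pi/3)=\pm1$. By \eqref{eq:Dsolve} and \eqref{eq:gausscancel}, $\Ima\G_p(B_2)=0$, $\Ima\G_p(B_4)=\kappa_pv$ and $\Ima\G_p(B_6)=-\lambda_pv$. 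The expansion with $s=7$ therefore gives the main term $-(\kappa_p-\lambda_p/n)v/n^2$ in \eqref{eq:local}; the sign convention must be checked against \eqref{eq:tstar}. Odd $B_r$ integrate to zero.

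For the remainder, apply the majorant machinery of Section~\ref{sec:majorants} with $s=7$ and $h_0=301^{-1/2}$. Take $a=1/4$, $R_0=1/2$ and $C=C_A(t_p)$, with the disk bounds \eqref{eq:compactdisks} on a small interval around $t_p$. The majorants $a_r$ must be uniform in $|v|\le1$, which is harmless since $v$ enters only $A_3$. Obtain $c_0,\eta$ from \eqref{eq:linearabsorb}. Then \eqref{eq:majorerror}, together with the tail extension \eqref{eq:tailsmom}, bounds the major-arc error by a constant times $n^{-4}$. The complementary arcs are handled by $\mathcal T_{p,4}(t_p;n)$ from \eqref{eq:compacttails}. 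That bound is monotone for $p\ge5$, and for $p=4$ one evaluates it directly or uses the larger constant $560$. Summing the two contributions numerically at $n=301$ gives the stated $C_p$.

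For the sign claims, first take $|v|\le1$. By \eqref{eq:kappabound}, $\kappa_p-\lambda_p/n>k_p$. If $|v|>\varepsilon_p/n^2$, the main term has modulus greater than $k_p\varepsilon_p/n^4$, which equals $562.5/n^4>560/n^4$ for $p=4$ and $60/n^4$ for $p\ge5$. It also has sign $-\sgn v$. The same reasoning shows at most one zero can occur. For $1<|v|\le130$, use the moving-saddle result instead of the fixed saddle. The key point is that the ranges overlap, since Proposition~\ref{prop:A} with the improved $E_p$ on $I_p$ covers $|m-S_{p,n}|$ down to the value where the main term $g'_{p,2}\cdot|\Delta t|$ exceeds $E_p/n^2$. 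With the table constants this happens for $|m-S_{p,n}|\gtrsim 72$ ($p=4$) or $100$ ($p\ge5$), which is not yet $\le1$. So I would try instead to extend the fixed-saddle estimate to $|v|\le130$. The $v$-dependent terms $3vh^3$ then remain perturbative: $|v|h^3\le130\cdot301^{-3/2}$ is small, and the cross terms $v^2$ in $B_6$ are real. The error then grows like $C(1+|v|+v^2h^2)/n^4$, which is still dominated by $k_p|v|/n^2$ once $|v|\ge1$. This uniform-in-$v$ extension is the main obstacle, and it is what I would verify numerically first.
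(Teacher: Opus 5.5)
\textbf{The bound for $|v|\le1$.} This part follows the paper. You fix the saddle at $t_p$ and write the degree as $-pf_1n^2+D_1n+(D_0+3v)+D_{-1}/n$, so that \eqref{eq:EM} yields \eqref{eq:Ar}. You then use the $s=7$ majorants with $C=C_A(t_p)$, and the envelope $\mathcal T_{p,4}(t_p;301)$ for $p\ge5$. The signs for $\varepsilon_p/n^2<|v|\le1$ then follow from $k_p\varepsilon_p\ge C_p$.

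The sign you left open is settled by $\e^{\ii p\psi(t_p)-4\pi\ii/3}=\e^{-3\pi\ii/2}=\ii$. The two arcs therefore give $\Rea\bigl(\ii\sum_r\G_p(B_r)h^r\bigr)=-\kappa_pv/n^2+\lambda_pv/n^3$, as stated.

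For $p=4$, do not treat $560$ as slack. The paper's total is $558.997$. It reaches this with sharpened Cauchy bounds $(5,1,\tfrac12,2,\tfrac12)$, the real-part bound $\Rea E,\Rea P_7\le0.03+0.05y^2$, and the separate middle-arc gap \eqref{eq:localgap}. Your phrase ``evaluate directly'' is not yet an argument.

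\textbf{The gap: signs on $1<|v|\le130$.} You correctly observe that Proposition~\ref{prop:A} only reaches $|m-S_{p,n}|\gtrsim72$ (for $p=4$) or $102$ (for $p\ge5$). So the fixed-saddle estimate must be pushed out to $|v|\le130$. You leave that step unverified, and the growth model you offer for it is wrong.

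Since $v$ enters only through the imaginary term $-3\ii vy$ in $A_3$, the real-part bound and the decay constant $b$ are indeed unchanged. The coefficient majorants are not. The majorant $a_3$ contains the term $3|v|x$. Hence $Q_3$, $Q_2$, $Q_1$ contain $3|v|x$, $9|v|xh_0$ and $9|v|xh_0^2$, respectively. Then $U_8$ contains $Q_3^2Q_2/2$, $Q_3^2Q_1^2/4$, $Q_2^4/24$, and so on.

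So the constant multiplying $n^{-4}$ grows like $v^2$ and faster, not like $v^2h^2$. Even the exact $h^8$ coefficient contains $v^2$, through $\tfrac12A_3^2B_2$. With majorants the growth is severe: the paper's $C_{4,B}$ rises from $560$ at $B=1$ to $4.06\times10^6$ at $B=130$.

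Consequently a single window $|v|\le130$ decides the sign, via \eqref{eq:kappabound}, only for $|v|>C_{4,130}/(k_4n^2)$. At $n=301$ this radius is just under $119.5$, so the window says nothing on $1<|v|<119$.

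What is missing is the paper's cascade in Table~\ref{tab:windows}. It uses windows $B=1,20,60,100,120,130$ for $p=4$ and $B=1,20,60,130$ for $p\ge5$, each with an explicitly computed $C_{p,B}$. Each ambiguity radius $C_{p,B}/(k_pn^2)$ decreases in $n$ and lies inside the preceding window. For $p=4$ the last link is $119.499<120$, so the margins are thin, not the comfortable ones your heuristic suggests.

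Alternatively, you could factor the unimodular $\e^{-3\ii vyh^3}$ out of $\e^{E}$ before expanding. The remainder is then bounded by its $v=0$ value. All $v$-dependence then sits in an explicit Gaussian integral of $\e^{-3\ii vyh^3}\sum_{r\le7}B_r|_{v=0}\,h^r$, whose sign you would still have to analyse. One of these two devices is needed, and your proposal contains neither.
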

\begin{proof}
Write $f_r=F^{(r)}(t_p)$.
Use $h=n^{-1/2}$ and $z=t_p-\ii hy$ on $|hy|\le1/4$.
The degree is
$d=-pf_1n^2+D_1n+D_0+D_{-1}/n+3v$, and substitution in
\eqref{eq:EM} gives exactly \eqref{eq:Ar}. The constant phase is
$\e^{\ii p\psi(t_p)-4\pi\ii/3}=\ii$.
The two conjugate arcs in \eqref{eq:cauchy}, after the change
$y=n^{3/2}(\theta-2\pi/3)$, give Gaussian normalization
$\sqrt{pf_2/(2\pi)}$. Hence their polynomial contribution through
order $h^7$ is, by \eqref{eq:gausscancel},
\begin{equation}\label{eq:localpolynomial}
 \Rea\left(\ii\sum_{r=0}^7\G_p(B_r)h^r\right)
 =-\kappa_pv/n^2+\lambda_pv/n^3.
\end{equation}

For $p\ge5$, use $s=7$ in Section~\ref{sec:majorants}, the disk
bounds \eqref{eq:compactdisks} at $t=t_p$, and $C=C_A(t_p)$.
The real-part construction \eqref{eq:linearabsorb} applies uniformly
for $|v|\le130$, since $v$ enters only a purely imaginary term.
The complementary-arc error multiplied by $n^4$ is bounded by
$\mathcal T_{p,4}(t_p;301)$ in \eqref{eq:compacttails}.
Together, \eqref{eq:majorerror}, \eqref{eq:tailsmom}, and that envelope
give the strict upper bounds $54.652$, $19.167$, $12.408$, and
$8.599$ for $p=5,6,7,8$, respectively, when $|v|\le1$.

We give the sharper estimates required for $p=4$ in detail.
On $|\Ima z|\le1/4$ at $\Rea z=t_4$, the logarithmic denominators
exceed $0.97$, $|3z|<1.96$, and
$n^4|\mathcal R_n(z)|<0.412894<1/2$.
On $|z-t_4|\le1/2$, the denominators exceed $0.55$ and the
Cauchy bounds for $(F,\psi,H,J,W)$ can be sharpened to
$(5,1,1/2,2,1/2)$. For example,
$\psi''(z)=(3\sqrt3/2)u(1-u^2)/(1+u+u^2)^2$ with
$u=\e^{-3z}$, together with \eqref{eq:HJW}, gives the latter bounds.
Thus \eqref{eq:Rpoly} becomes
\[
 R(x)=40\,2^{19}x^{19}+8\,2^{17}x^{17}+4\,2^{15}x^{15}
                  +16\,2^{13}x^{13}+4\,2^{11}x^{11}.
\]
The real-coefficient majorants give
$\Rea E,\Rea P_7\le0.03+0.05y^2$ and
$2F''(t_4)-0.05>1.85$. With
$\sqrt{2F''(t_4)/\pi}<0.78$, the two integrals in
\eqref{eq:majorerror} are bounded, after multiplication by $n^4$,
by $2.685580$ and $555.780836$. The extension error from
\eqref{eq:tailsmom}, using the smaller Gaussian exponent $1.9$,
is less than $0.000051$.

For the middle arcs, a second-order Euler--Maclaurin estimate gives
\[
 |P_n(\omega\e^{-(t_4-\ii s)/n})|^4
 \le\exp(4n\Rea F(t_4-\ii s)+1)\qquad(|s|\le1).
\]
Indeed, splitting $x$ at $19/40$ gives a denominator bound $0.57$;
the first endpoint contribution has real part less than $0.057$,
and the second endpoint plus remainder has modulus less than $8/n$.
The dilogarithm formula gives the stronger gap
\begin{equation}\label{eq:localgap}
 F(t_4)-\Rea F(t_4-\ii s)>0.3s^2\qquad(1/4\le|s|\le1).
\end{equation}
To obtain this bound, cover $[1/4,1]$ by the 750 closed intervals
$[k/1000,(k+1)/1000]$, $250\le k<1000$, and retain 60 terms
in each dilogarithm series. The tail estimate in
Appendix~\ref{app:series} gives, on each interval, a lower bound
greater than $0.010024$ for the left side minus the right side
of \eqref{eq:localgap}. Negative $s$ follows by conjugation.
The normalized middle-arc error, multiplied by $n^4$, is therefore
at most
\[
 \frac{1.56\e}{2.4(1/4)}n^{7/2}\e^{-1.2n/16}<0.524872.
\]
For the remaining arcs use \eqref{eq:compactgaps} at $t=t_4$.
The geometric gap satisfies $\Delta>0.12$, and the positive-axis
main term exceeds $0.15n$. Thus the errors multiplied by $n^4$
are bounded by
\[
 2.5n^{11/2}\e^{10.6-0.16n}<0.005224,
 \qquad
 2.5n^{11/2}\e^{22.76-0.6n}<4\cdot10^{-55},
\]
on $|1-q|\ge1/3$ and $|1-q|<1/3$, respectively.
All these envelopes decrease for $n\ge301$. Their regions, together
with the middle arcs, cover the complement of the major arcs.
The sum of the unrounded bounds is less than $558.996561<560$.
This proves \eqref{eq:local} for every power.

The same polynomial bounds apply to larger intervals $|v|\le B$.
Table~\ref{tab:windows} lists upper bounds $C_{p,B}$ for the error
constant multiplying $n^{-4}$. For a given row, the remaining
possible sign ambiguity is restricted to
$|v|<C_{p,B}/(k_pn^2)$ by \eqref{eq:kappabound}.
\begin{table}[ht]
\centering
\begin{tabular}{crrr}
\toprule
$p$&$B$&$C_{p,B}$&inner radius at $n=301$\\\midrule
4&1&560&$<1500/301^2$\\
4&20&17000&$<0.501$\\
4&60&340000&$<10.008$\\
4&100&1700000&$<50.037$\\
4&120&3100000&$<91.243$\\
4&130&4060000&$<119.499$\\\midrule
5--8&1&60&$\le150/301^2$\\
5--8&20&5000&$<0.138$\\
5--8&60&120000&$<3.312$\\
5--8&130&1400000&$<38.631$\\\bottomrule
\end{tabular}
\caption{Overlapping windows at the critical saddle.}\label{tab:windows}
\end{table}
Each inner radius is smaller than the preceding half-width in its
group, and all radii decrease with $n$. Starting with $B=1$ and
successively using the larger windows proves the sign conclusions
throughout $|m-T_{p,n}|\le130$.
\end{proof}

\section{The intermediate scaled range}\label{sec:B}
Use the ordinary saddle relation
\begin{equation}\label{eq:ordinary}
 d_{B,p}(t)=-pn^2F'(t),\qquad 0.99\le t\le3,
 \qquad N=n/t,\qquad G_t(w)=tF(tw).
\end{equation}
The five functions in the scaled expansion are
\begin{equation}\label{eq:scaledfunctions}
 G_t(w),\quad\psi(tw),\quad H(tw)/t,\quad
 J(tw)/t^2,\quad W(tw)/t^3.
\end{equation}
The normalization is unchanged because
$n^3F''(t)=N^3G_t''(1)$.

\begin{proposition}\label{prop:B}
Suppose $n\ge301$, $0.99\le t\le3$, and $d=d_{B,p}(t)$ is an
integer. Then $c_d^{(p)}(n)/\M_{p,n,d}(t)>0.1$ for
$d\equiv2\pmod3$, and this ratio is less than $-0.1$ for
$d\equiv1\pmod3$. For $5\le p\le8$, the ratio is also greater
than $0.1$ when $d\equiv0\pmod3$.
\end{proposition}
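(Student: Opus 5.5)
We follow the scheme of Section~\ref{sec:A}, after rescaling. Put $N=n/t\ge301/3>100$ and $z=t(1-\ii v)$, i.e.\ $z=tw$ with $w$ near $1$ on the segment $\Rea w=1$. By \eqref{eq:EM},
\[
 p\log P_n(\omega\e^{-z/n})=pN G_t(w)+\ii p\psi(tw)+pH(tw)/(tN)+\ii pJ(tw)/(t^2N^2)+pW(tw)/(t^3N^3)+p\mathcal R_n(z).
\]
Here $|\mathcal R_n|\le C_*(v,b)N^{-4}$ by \eqref{eq:EMscaled}, and the five functions \eqref{eq:scaledfunctions} play the roles of $F,\psi,H,J,W$ with $h=N^{-1/2}$. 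The degree $d_{B,p}(t)=-pn^2F'(t)=-pN^2G_t'(1)$ is the exact saddle relation. The normalization is unchanged since $n^3F''(t)=N^3G_t''(1)$.

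\textbf{Major arcs.} The leading conjugate-arc contribution is $g_{p,j}(t)$. The first correction is not needed, because here no cancellation has to be resolved. We take $s=1$ or $s=3$ in Section~\ref{sec:majorants} with $h_0=(3/301)^{1/2}$, so the first correction is absorbed into the remainder. The Taylor coefficients $a_r$ on $|w-1|\le R_0$ are majorized by Cauchy bounds for $G_t$ and the rescaled functions, uniformly in $t\in[0.99,3]$. We use a subdivision of $[0.99,3]$ into small cells and the series of Appendix~\ref{app:series} as before; near $\Rea(tw)\ge0.5$ the disks stay well inside $\Rea>0$, so the denominators are bounded below (as with the pairs $(|v|,b)$). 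Since $G_t''(1)=t^3F''(t)$, the Gaussian width is controlled. The real-part control \eqref{eq:linearabsorb} gives $c_0,\eta$, and \eqref{eq:majorerror}, \eqref{eq:tailsmom} give an error $O(h_0^2)$ relative to the amplitude, with explicit constant.

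\textbf{Target inequality.} On $t\ge0.99$ we have $\psi(t)\in(-\pi/18,\psi(0.99)]$ with $\psi(0.99)$ close to $0$, since $\e^{-2.97}\approx0.05$. This gives $g_{p,2}(t)=\cos(p\psi-4\pi/3)$. For $p\le8$ the phase $p\psi(t)+2\pi/3$ is at most about $2\pi/3-0.23$, so $g_{p,2}(t)\ge\cos(2\pi/3-0.23)>0.1$ plus a definite margin; we compute the exact margin at $t=0.99$ using monotonicity. Likewise $g_{p,1}\le-1/2$ by \eqref{eq:phases}, and for $p\ge5$ we have $g_{p,0}>0.17$. It then suffices that the total error be below $\min_t|g_{p,j}(t)|-0.1$, which we check per cell.

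\textbf{Complementary arcs.} This is the main obstacle, because the gap now scales with $N$ rather than $n$ and the parameter reaches $t=3$, where $\e^{-3t}$ is tiny and the arc gaps \eqref{eq:smallgap} degenerate. For middle arcs $|v|$ between $v_0$ and about $3$ (i.e.\ $|\theta-2\pi/3|\lesssim 3t/n$), I would prove a rescaled analogue of \eqref{eq:compactgap}: $G_t(1)-\Rea G_t(1-\ii v)\ge c\,G_t''(1)\min(v^2,1)$. The proof follows the same $-\log(1-X)\ge X$ argument with $x\mapsto tx$, using the pairs $(0.6,0.83),(1,0.65),(3,0.35)$ to pass from $G_t$ to the product via a second-order Euler--Maclaurin estimate. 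Farther out I would use \eqref{eq:WK1} and \eqref{eq:WK2} with the geometric-sum gap of \eqref{eq:compactgaps}. This holds because $\sum r^{3k}(1-\cos3k\theta)$ is at least of order $n/t\cdot\min(1,\cdot)=N\cdot(\text{const})$, and Lemma~\ref{lem:modulus} converts it to the normalization \eqref{eq:cauchy}. Each envelope has the form $N^{c}\e^{-\mu pN}$ times $\eta_p$. These envelopes decrease for $N\ge100$ once $100\mu p$ exceeds the exponent, and we verify numerically on the cells that their sum is a small fraction of the margin. If the $t=3$ end is too weak, the first fallback is to shrink the major-arc cutoff in $v$ to enlarge the middle-arc gap, because the positive-axis contribution via \eqref{eq:WK2} is always exponentially negligible.
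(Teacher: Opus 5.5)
Your architecture is the paper's: rescaling to $N=n/t$, the majorant scheme of Section~\ref{sec:majorants}, and dilogarithm and Wang--Krattenthaler bounds on the complementary contour. However, your error budget rests on a wrong evaluation of the sign margin, and with the correct margin your simplifications do not close.

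Since $\psi$ decreases from $\psi(0)=0$ to $-\pi/18$, the value $\psi(0.99)\approx-0.160$ lies near $-\pi/18$, not near $0$. Also $\cos(2\pi/3-0.23)\approx-0.29$ is negative, so your displayed lower bound is false. The true value is $g_{4,2}(0.99)=\cos(2\pi/3+4\psi(0.99))\approx\cos(1.454)\approx0.1165$. For $p=4$ in residue $2$, the entire error near $t=0.99$ must therefore stay below about $0.0165$. The tight end is $t=0.99$, not $t=3$: there $u=\e^{-3t}$ is largest, and so is the dilogarithm error $R$ in the middle-arc gap. In the scaled variables $G_t(w)\to c/w$, so large $t$ is the uniform, easy end. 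Your remark that no cancellation has to be resolved is true, but the margin is still tiny.

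Against a budget of $0.0165$, you cannot absorb the first correction into the majorant remainder with a uniform $h_0^2=3/301$.
\begin{itemize}
\item At $t\approx1$ the first correction is the complex Gaussian average $C_{1,4}$ of $B_2=A_2+A_1^2/2$. Its real part $pH+f_4/(8pf_2^2)-5f_3^2/(24pf_2^3)+p\psi_1^2/(2f_2)$ has cancelling terms, so $|C_{1,4}|\approx0.45$.
\item Bounding $A_2$ and $A_1^2/2$ termwise by majorants gives about $1.26$, even with the exact Gaussian weight. It grows further once the real-part control lowers $b$ below $pf_2/2$ and inflates the $y^4$ and $y^6$ moments.
\item With $h_0^2=3/301$, this single term already uses about $0.0125$ of the $0.0165$. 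Higher orders, $\e^{c_0}$ and the tails are still to come.
\end{itemize}
The paper makes the $p=4$ budget close with two devices. It keeps $s=3$, subtracts the explicit $|C_{1,p}|/N_0$, and leaves only $E_{B,p}/N_0^2$ to the majorants. It also takes $N_0=301\cdot200/(i+1)$ on each cell $[i/200,(i+1)/200]$, so $N_0\approx302$ at the tight end rather than $100$.

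Separately, your middle-arc gap does not follow by rescaling the proof of \eqref{eq:compactgap}. The angle becomes $3tvx$, which reaches $27$ on $|v|\le3$, so the concavity step $\sin(3sx/2)\ge x\sin(3s/2)$ fails. The paper instead reads off $G_t(1)-\Rea G_t(1-\ii v)\ge cv^2/(1+v^2)-2R$ directly from \eqref{eq:dilog}. This requires $ca^2/(1+a^2)>2R$, which holds for the cutoff $a=0.4$ but fails for $a=1/4$ near $t=1$. Your fallback of shrinking the major-arc cutoff to enlarge the middle-arc gap is therefore backwards: a smaller cutoff shrinks the gap at its inner edge.
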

\begin{proof}
Use Section~\ref{sec:majorants} with $h=N^{-1/2}$, $s=3$,
cutoff $a=0.4$, and disk radius $R_0=0.8$ about $w=1$.
In \eqref{eq:Ar}, replace the five functions by
\eqref{eq:scaledfunctions}, take their derivatives at $w=1$,
and omit the degree corrections. The Gaussian moments are
$(2r-1)!!/(pG_t''(1))^r$. The first correction $C_{1,p}$ is the
Gaussian average of $B_2$ and may be complex. The major-arc
approximation, including its Gaussian extension, therefore has error
at most $|C_{1,p}|/N+E_{B,p}/N^2$ from its leading phase.

We specify the bounds giving $E_{B,p}$. On a parameter interval
$[t_-,t_+]$, set $\rho=\e^{-3t_-(1-R_0)}$, $b_*=1-\rho$,
and $z_+=1+R_0$. Bounds for the functions in
\eqref{eq:scaledfunctions} are
\begin{align}\label{eq:scaleddisks}
 M_G&=\frac{\pi^2/27+(\rho^3/(1-\rho^3)+3\rho/(1-\rho))/9}
                      {1-R_0},\notag\\
 M_\psi&=\pi/18-\log(1-\rho)/3,&
 M_H&=\frac{z_+(1+\rho^2)}{12b_*^2},\notag\\
 M_J&=\frac{3\sqrt3z_+^2\rho(1+\rho^2)}{54b_*^4},&
 M_W&=\frac{13\cdot54z_+^3\rho(1+\rho)}{19440b_*^3}.
\end{align}
These follow directly from \eqref{eq:dilog} and \eqref{eq:HJW}.
For the real exponent, a fourth derivative bound on $\Rea w=1$
is more efficient. Put $c=\pi^2/27$, $u=\e^{-3t}$, and
$S_k(u)=\sum_{l\ge1}l^ku^l$. Then
\begin{equation}\label{eq:F4}
 \frac{|G_t^{(4)}(w)|}{4!}\le
 c+\frac19\sum_{r=0}^4
 \frac{(9t)^rS_{r-2}(u^3)+3(3t)^rS_{r-2}(u)}{r!}=:L_4.
\end{equation}
The functions $S_{-2},S_{-1},S_0,S_1,S_2$ are, respectively,
$\Li(u)$, $-\log(1-u)$, $u/(1-u)$, $u/(1-u)^2$, and
$u(1+u)/(1-u)^3$. The Gaussian-removed real part contributed by
$pNG_t$ is at most $pL_4a^2y^2$.

Let $h_0=N_0^{-1/2}$ and
$\widetilde\psi_j=t^j\psi^{(j)}(t)/j!$. The phase term is bounded
by $c_1|y|$, where
\[
 c_1=h_0\max\left\{\frac{pt\sqrt3u}{2(1-u)^2},
                 p|\widetilde\psi_1|+p|\widetilde\psi_3|a^2\right\}.
\]
For use on vertical segments, denote by $H_v,J_v,W_v$ the rational
bounds in \eqref{eq:scaleddisks} with
$z_+=\sqrt{1+v^2}$, $\rho=u$, and $b_*=1-u$.
The remaining real exponent is bounded by
\[
 c_0=p\left(H_a/N_0+J_a/N_0^2+W_a/N_0^3
                         +C_*(a,0.83)/N_0^4\right).
\]
Take $\epsilon=p\inf G_t''(1)/400$,
$\eta=p\sup L_4a^2+\epsilon$, and replace $c_0$ by
$c_0+c_1^2/(4\epsilon)$. These bounds apply to the exact exponent
and to every $P_3(\xi,y)$. The resulting
$b=p\inf G_t''(1)/2-\eta$ is positive on the intervals used below.
Equations \eqref{eq:V}--\eqref{eq:tailsmom}, with
$C=C_*(a,0.83)$ and disk bounds \eqref{eq:scaleddisks}, now give
$E_{B,p}$.

Here are explicit envelopes for the complementary arcs. In the
following expressions use $u=\e^{-3t_-}$ and
$\eta_p=\sqrt{p\sup G_t''(1)/(8\pi)}$. Put
\begin{equation}\label{eq:scaledgaps}
 \begin{split}
 R&=\frac{u^3/(1-u^3)+3u/(1-u)}9,\qquad
 \Delta_m=\frac{ca^2}{1+a^2}-2R,\\
 L&=\frac{1-u}{N_0(\e^{3/N_0}-1)},\qquad
 \Delta_b=L-\frac{1+u}{\sqrt{9+4[N_0\sin(3b/(2N_0))]^2}},\\
 P_+&=\frac{(\e^{-1/N_0}+\e^{-2/N_0})(1-u)}{18}.
 \end{split}
\end{equation}
The dilogarithm representation gives
$G_t(1)-\Rea G_t(1-\ii v)\ge cv^2/(1+v^2)-2R$;
thus $\Delta_m$ applies for $a\le|v|\le3$.
An upper bound for the additional real exponent is
\begin{equation}\label{eq:Am}
 A_{m,p}=p\left[\frac16\log\frac{1+u}{1-u}
       +\frac{H_3}{N_0}+\frac{J_3}{N_0^2}
       +\frac{W_3}{N_0^3}+\frac{C_*(3,0.35)}{N_0^4}\right].
\end{equation}
The total normalized complementary-arc bound is
\begin{align}\label{eq:Btails}
 E_{{\rm tail},p}={}&
 4(3-a)\eta_p\sqrt{N_0}\e^{A_{m,p}-pN_0\Delta_m}
 +48\eta_p\sqrt{N_0}\e^{8p/15-(p/3)N_0\Delta_3}\notag\\
 &+2\pi\eta_pN_0^{3/2}\e^{12p/5-(p/3)N_0\Delta_{15}}
 +2\pi\eta_pN_0^{3/2}\e^{136p/25-pN_0P_+}.
\end{align}
The second term covers distances $3/N$ through $15/N$ from the two
nonreal roots; here $|1-q|>1.5$. The third uses \eqref{eq:WK1}
away from all three roots on $|1-q|\ge1/3$. The last uses
\eqref{eq:WK2} and covers the remaining positive-axis neighborhood,
since $(15+1)/N_0<1/3$. The geometric sums giving \eqref{eq:scaledgaps},
and monotonicity of $N\sin(b/N)$, make these bounds uniform for
$N\ge N_0$. All envelopes decrease because their decay constants
times $N_0$ exceed their exponents of $N$.

Use the 402 closed intervals $[i/200,(i+1)/200]$,
$198\le i<600$, and $N_0=301\cdot200/(i+1)$ on each.
Subtract
\[
 |C_{1,p}|/N_0+E_{B,p}/N_0^2+E_{{\rm tail},p}
\]
from $g_{p,0}$, $-g_{p,1}$, and $g_{p,2}$, as appropriate.
The resulting strict lower bounds are listed in
Table~\ref{tab:B}; all gaps and positivity conditions just used hold
on every interval. This proves the proposition.
\begin{table}[ht]
\centering
\begin{tabular}{crrr}
\toprule
$p$&residue $0$&negative of residue $1$&residue $2$\\\midrule
4&---&0.9036249&0.1017609\\
5&0.6276837&0.9599507&0.2635920\\
6&0.4879332&0.9877481&0.4158374\\
7&0.3312488&0.9738453&0.5561312\\
8&0.1633406&0.9292018&0.6814778\\\bottomrule
\end{tabular}
\caption{Strict lower bounds for the normalized sign margins in the
intermediate range.}\label{tab:B}
\end{table}
\end{proof}

The same explicit bounds give the overlap in degree:
\begin{equation}\label{eq:ABoverlap}
 d_{B,p}(0.99)-d_{A,p}(1)>0.016n^2\qquad(n\ge301).
\end{equation}
For $p\ge5$ the stronger bound $0.02n^2$ holds. They also give
\begin{equation}\label{eq:separation}
 301^2\bigl(\alpha_p+pF'(0.99)/3\bigr)>131,
 \quad 0<\beta_p<1,
 \quad -1<\gamma_p<0,
 \quad 0<\delta_p<1.
\end{equation}
Consequently every residue-$2$ degree in the intermediate range has
$m<T_{p,n}-130$. For the quartic case, the simpler bounds
$-4F'(0.99)/3<0.41$ and $\alpha_4>0.74$ already imply this separation.

\section{The unbounded edge range}\label{sec:edge}
We now treat $t\ge3$ and $N=n/t\ge N_0=124/5$ with the ordinary
saddle relation. Throughout this section put
\[
 u=\e^{-3t}\le U=1/8000,\qquad
 r=\e^{-1/N}>24/25,\qquad c=\pi^2/27.
\]
The estimates will be uniform in the whole unbounded range of $t$.

\subsection{Weighted product bounds}
Retaining the weight in the proof of \cite[Lemma 9.1]{WK} gives
\begin{equation}\label{eq:weighted}
 \mathcal L\ge\sum_{k=1}^n f(3k/N)(1-\cos3k\theta)
                   -\frac{M(u)}{|1-r\e^{\ii\theta}|},\qquad
 f(x)=\frac{\e^{-x}}{1+\e^{-x}+\e^{-2x}},
\end{equation}
where
\[
 M(u)=\frac2{\sqrt3}\left(\frac\pi3-
       \arctan\frac{1+2u}{\sqrt3}\right)+\frac{2u}{1+u+u^2}
 <M_0:=121/200.
\]
Indeed, $M(u)\le\pi/(3\sqrt3)+2U<M_0$.
The function $f$ decreases and $f(0)=1/3$. Right Riemann sums
and Abel summation yield
\begin{equation}\label{eq:weighted-sums}
 \sum_{k=1}^n f(3k/N)\ge NA_0-1/3,\qquad
 \left|\sum_{k=1}^n f(3k/N)\e^{3\ii k\theta}\right|
 \le\frac1{3|\sin(3\theta/2)|},\quad
 A_0=\frac\pi{9\sqrt3}-\frac U3.
\end{equation}
Here $\int_0^\infty f(x)\,dx=\pi/(3\sqrt3)$ and its omitted tail
after $3t$ is at most $u$. At angular distance at least $B/N$
from every cube root, it follows that
\begin{equation}\label{eq:weighted-gap}
 \mathcal L\ge NG_B-1/3-M_0/|1-q|,\qquad
 G_B=A_0-\frac1{3N_0\sin(3B/(2N_0))}.
\end{equation}
We shall use $B=3,7,15$. Monotonicity of $N\sin(3B/(2N))$
justifies the common lower bound $N_0$ in this formula.

\begin{lemma}\label{lem:positive-axis}
In the edge range, $|1-q|<1/3$ implies
\begin{equation}\label{eq:newpositive}
 \mathcal L\ge NA_0-38/15.
\end{equation}
\end{lemma}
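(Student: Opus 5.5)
The plan is to rerun the argument behind \eqref{eq:WK2} (\cite[Lemma 9.4]{WK}), but to keep the weight $f$ and the edge-range constants $u\le U$, $r>24/25$, $N\ge N_0$ explicit. The target constant $A_0$ in \eqref{eq:weighted-sums} is exactly the weight integral, and that integral should be the main term. Here $q=r\e^{\ii\theta}$ with $r=\e^{-t/n}=\e^{-1/N}$.

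\textbf{Step 1 (reduction to a full Pochhammer product).} Since only $k\not\equiv0\pmod3$ occur in $P_n$, $|1-(r\omega)^k|^2=1+r^k+r^{2k}$. Hence
\[
 \log|P_n(r\omega)|=\tfrac12\sum_{1\le k\le3n,\,3\nmid k}\log(1+r^k+r^{2k}).
\]
For the numerator I write $P_n(q)=(q;q)_{3n}/(q^3;q^3)_n$. The denominator satisfies $|1-q^{3j}|\ge1-r^{3j}$, so it contributes at most $-\sum_{j\le n}\log(1-r^{3j})$. This sum is comparable to $\frac{N}{3}\cdot\frac{\pi^2}{6}$, up to an $O(\log N)$ endpoint term that I must control explicitly.

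\textbf{Step 2 (the full product near $1$).} For the full product I use $\log|1-q^k|=\Rea\log(1-q^k)=-\sum_{l\ge1}r^{kl}\cos(kl\theta)/l$. Summing over $k\le3n$ in closed form gives
\[
 \log|(q;q)_{3n}|=-\sum_{l\ge1}\frac1l\Rea\frac{q^l(1-q^{3nl})}{1-q^l}.
\]
The region $|1-q|<1/3$ is what controls $\Rea\bigl(q^l/(1-q^l)\bigr)$ for small $l$. For $l$ with $l|1-q|$ small, $1-q^l\approx l(1-q)$ and $\Rea(1/(1-q))\ge1/2$. For larger $l$, I bound the term crudely by $r^l/(1-r^l)$. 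The factor $1-q^{3nl}$ introduces $r^{3nl}=u^l\le U^l$, which is absorbed in the $U/3$ already subtracted in $A_0$.

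\textbf{Step 3 (collecting terms).} I then combine three pieces: the exact dilogarithm-type sum for the main term, the denominator bound from Step~1, and the Step~2 estimate. The $N$-linear parts should combine to at least
\[
 N\Bigl(\tfrac{\pi^2}{27}+\ldots\Bigr)\ge NA_0,
\]
where $\pi/(9\sqrt3)\approx0.2015$ is well below the available linear coefficient. This slack is what I will spend on the boundary contributions. Each Riemann-sum and Euler--Maclaurin endpoint term (monotone summands, step $1/N$, $N\ge124/5$) I bound by an absolute constant such as $1/3$ or $\log 2$, and their sum should be at most $38/15$.

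\textbf{Main obstacle.} The hard step is Step~2 uniformly in $\theta$. The disk $|1-q|<1/3$ allows $|\theta|$ up to about $1/3$, which is large compared with $1/N$. Then $q^k$ winds around, and $|1-q^k|$ is not small for moderate $k$. Neither the small-$\theta$ expansion nor the trivial bound $|1-q^k|\le1+r^k$ is adequate alone, because the trivial bound loses $\approx0.55N$. My first attempt will be to split at $l\le L:=\lfloor1/(3|1-q|)\rfloor$. In that range I would use $\Rea\bigl(q^l/(1-q^l)\bigr)\ge c/(l|1-q|)-1$, which gives a contribution at least of order $\log$ rather than $N$. For $l>L$, I would use the weighted cosine bound from the proof of \eqref{eq:weighted} together with \eqref{eq:weighted-sums}. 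If this split leaves too large a constant, the fallback is to take the WK inequality \eqref{eq:WK2} verbatim with $r=\e^{-1/N}$. In that case the work is to show $\frac{(r+r^2)(1-r^{3n})}{6(1-r^3)}\ge NA_0+\bigl(5.44-38/15\bigr)-(\text{slack})$. That reduces to comparing $\frac{r+r^2}{6(1-r^3)}\approx\frac N9$ with $NA_0$, which fails since $1/9<A_0$. So the fallback would need the sharper weighted version, and the split above is the real route.
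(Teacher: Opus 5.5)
There is a genuine gap, and it lies in Step~1 rather than in the obstacle you flag. Writing $P_n(q)=(q;q)_{3n}/(q^3;q^3)_n$ and using $|1-q^{3j}|\ge1-r^{3j}$ costs $-\sum_{j\le n}\log(1-r^{3j})=\frac{\pi^2}{18}N+O(uN+\log N)\approx0.548N$. The entire main term obeys $\log|P_n(r\omega)|\le nF(t)<cN\approx0.366N$ by Lemma~\ref{lem:modulus}, and your Step~3 tally does not account for this loss. Your chain can therefore give \eqref{eq:newpositive} only if $-\log|(q;q)_{3n}|\ge(\pi^2/54+A_0)N-O(uN+\log N)\approx0.384N$ throughout $|1-q|<1/3$.

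That requirement is false. Take $q=r\zeta$ with $\zeta=\e^{2\pi\ii/19}$. It lies in the disk, since $|1-q|^2=(1-r)^2+2r(1-\cos(2\pi/19))<0.111$. Grouping the factors by $k \bmod 19$ and using $\sum_{j=0}^{18}\Li(\zeta^jw)=\Li(w^{19})/19$ gives $-\log|(q;q)_{3n}|=\bigl(\frac{\pi^2}{6\cdot19^2}+O(u)\bigr)N+O(\log N)\approx0.005N$. At such points $|1-q^{3j}|$ is nowhere near $1-r^{3j}$. The loss is an artefact of estimating the two Pochhammer symbols separately, and no bound on $(q;q)_{3n}$ alone can repair it. Step~2 would add linear losses of its own. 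The crude tail $\sum_{l>L}r^l/(l(1-r^l))$ is of order $N/L\approx3\rho N$. The weighted cosine bound \eqref{eq:weighted} is indexed by the factors $k$, not by the Lambert index $l$, so it does not fit that split.

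The missing idea is the one your last sentences point to and then set aside. Compare $P_n(q)$ with $P_n(r\omega)$ factor by factor over $3\nmid k$, never isolating $(q^3;q^3)_n$, and retain the weights. From $\log\bigl((1+r^k+r^{2k})/|1-q^k|^2\bigr)\ge r^k(1+2\cos k\theta)/(1+r^k+r^{2k})$, the main term is $\frac12S_{\mathrm{all}}$ with $S_{\mathrm{all}}=\sum_{k\le3n,\,3\nmid k}f(k/N)$. Monotonicity of $f$ and \eqref{eq:weighted-sums} give $\frac12S_{\mathrm{all}}\ge\sum_{k=1}^nf(3k/N)\ge NA_0-\frac13$.

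The paper reruns the prefix/tail argument behind \eqref{eq:WK2} with these weights kept. Put $\rho=|1-q|$, $h=\min(\lfloor1/(3\rho)\rfloor,n)$ and $x=r^{3h}$. The prefix $k\le3h$, where $|1-q^k|\le k\rho\le1$, yields $h[2+\log(1+\sqrt x+x)]$ up to the constant $-\frac12\log3$. Summation by parts bounds the oscillating tail by $\frac{1-r^3}{\rho L}S_{\mathrm{tail}}$ plus a boundary term of order $u/(\rho L)$, where $L=|1+q+q^2|\ge r+(1+r^2)\cos\theta>11/4$ on the disk. The inputs $(1-r^3)S_{\mathrm{tail}}\le2(x-u)$, $1/(3\rho)<h+1$, $S_{\mathrm{tail}}\ge S_{\mathrm{all}}-2h/3$ and $\log(1+\sqrt x+x)\ge x\log3$ bound all $h$-dependent terms below by $\frac53+\frac12\log3-\frac{48}{11}(1+U)>-\frac{11}{5}$. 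Adding the $-\frac13$ gives the constant $\frac13+\frac{11}{5}=\frac{38}{15}$. The case $h=n$ follows from the prefix alone.
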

\begin{proof}
Set $\rho=|1-q|$, $h=\min(\lfloor1/(3\rho)\rfloor,n)$,
$x=r^{3h}$, $L=|1+q+q^2|$, and $D_u=1+u+u^2$.
Let $S_{\rm tail}$ be the sum of $r^k/(1+r^k+r^{2k})$ over
$3h<k\le3n$, $3\nmid k$, and let $S_{\rm all}$ be the same sum
over $1\le k\le3n$. The prefix/tail decomposition in
\cite[(9.6)--(9.8)]{WK}, retaining its weights, gives
\begin{equation}\label{eq:prefix}
 \mathcal L\ge
 \left(\frac12-\frac{1-r^3}{\rho L}\right)S_{\rm tail}
 +h[2+\log(1+\sqrt x+x)]-\frac{4u}{D_u\rho L}-\frac12\log3.
\end{equation}
If $h=n$, the prefix itself gives
$\mathcal L\ge2n-(\log3)/2$, which is stronger than
\eqref{eq:newpositive} because $n=tN\ge3N$.
If $h<n$, there are $2h$ prefix weights, each at most $1/3$.
Thus $S_{\rm tail}/2\ge S_{\rm all}/2-h/3$, and
\[
 (1-r^3)S_{\rm tail}\le(r+r^2)(x-u)\le2(x-u).
\]
The condition $\rho<1/3$ implies
\[
 \cos\theta>r/2+4/(9r),\qquad
 L\ge r+(1+r^2)\cos\theta>11/4\quad(r\ge24/25).
\]
Using also $1/(3\rho)<h+1$ in \eqref{eq:prefix}, we obtain
\[
 \mathcal L\ge\tfrac12S_{\rm all}
 +h[5/3+\log(1+\sqrt x+x)]
 -\frac{24}{11}(h+1)(x+u)-\tfrac12\log3.
\]
The concavity of $\log(1+\sqrt x+x)$ on $[0,1]$ gives the lower
bound $x\log3$. The resulting coefficient of $h$ is positive.
Since $h\ge1$, $x\le1$, and $u\le U$, the last three terms
are therefore bounded below by
\[
 \frac53+\frac12\log3-\frac{48}{11}(1+U)>-11/5.
\]
Finally,
$S_{\rm all}/2\ge\sum_{k=1}^nf(3k/N)\ge NA_0-1/3$,
which proves the claim.
\end{proof}

\subsection{A positive real major-arc integral}
Write $G_t(w)=c/w+R_t(w)$. Termwise differentiation of the
dilogarithm series gives, for $\Rea w=1$ and $t\ge3$,
\begin{equation}\label{eq:E2}
 |R_t''(w)|/2\le E_2:=0.0021.
\end{equation}
Indeed, a uniform upper bound is
\[
 \frac19\left[\frac{\e^{-27}}{1-\e^{-27}}(1+27+27^2/2)
 +\frac{3\e^{-9}}{1-\e^{-9}}(1+9+9^2/2)\right]
 <0.002077655.
\]
The bound is uniform because $t^r\e^{-3lt}$ decreases for
$t\ge3$, $r\le2$, $l\ge1$. In particular, on writing
$f_2=G_t''(1)$ in this subsection,
\begin{equation}\label{eq:edgecurvature}
 f_{2,-}:=2c-2E_2\le f_2\le2c+2E_2=:f_{2,+}.
\end{equation}

Set $\theta=2\pi/3+v/N$, $y=v\sqrt N$, and $|v|\le1$.
Consider the exponent
\begin{equation}\label{eq:Kt}
 K_{p,t}(y)=pN[G_t(1-\ii v)-G_t(1)+\ii vG_t'(1)]
                            -\frac{p(1-\ii v)}{12N}.
\end{equation}
Its symmetric integral is real. For $a_p=pc$ and $\beta_p^*=p/12$,
the reference exponent is
\begin{equation}\label{eq:Kinfty}
 K_{p,\infty}(y)=-\frac{a_py^2}{1+y^2/N}-\frac{\beta_p^*}{N}
 +\ii\left(-\frac{a_py^3/\sqrt N}{1+y^2/N}
                         +\frac{\beta_p^*y}{N^{3/2}}\right).
\end{equation}
Equation~\eqref{eq:E2} gives
$|K_{p,t}-K_{p,\infty}|\le pE_2y^2$.

Choose
\begin{equation}\label{eq:Yp}
 Y_p=\frac85(4/p)^{1/3}.
\end{equation}
On $|y|\le Y_p$, the modulus of the reference phase is at most
$a_pY_p^3/\sqrt{N_0}+\beta_p^*Y_p/N_0^{3/2}<\sqrt2$.
We may therefore use $\cos\Phi\ge1-\Phi^2/2\ge0$, together with
$\e^{-a_py^2/(1+y^2/N)}\ge\e^{-a_py^2}$.
The full Gaussian moments of
$\tfrac12(a_p|y|^3/\sqrt{N_0}
                   +\beta_p^*|y|/N_0^{3/2})^2$
give the three losses below. On $Y_p<|y|\le\sqrt N$, a possible
negative contribution is bounded by $\e^{-a_py^2/2}$.
It follows that
\begin{align}\label{eq:J0}
 \sqrt{a_p/\pi}\int_{-\sqrt N}^{\sqrt N}
             \e^{K_{p,\infty}(y)}\,dy&\ge J_{0,p},\notag\\
 J_{0,p}=\e^{-p/(12N_0)}\bigg[1
 &-\frac{\e^{-a_pY_p^2}}{\sqrt{\pi a_p}Y_p}
 -\frac{15}{16a_pN_0}-\frac{p}{16a_pN_0^2}
 -\frac{p^2}{576a_pN_0^3}
 -\frac{2\e^{-a_pY_p^2/2}}{\sqrt{\pi a_p}Y_p}\bigg].
\end{align}
Let
$b_p^*=a_p/2-pE_2$ and $\nu_{p,+}=\sqrt{pf_{2,+}/(2\pi)}$.
Exponential error propagation and the Gaussian second moment give
\[
 E_{F,p}=\nu_{p,+}\frac{pE_2\sqrt\pi}{2(b_p^*)^{3/2}}.
\]
Consequently the true real integral satisfies
\begin{equation}\label{eq:core}
 \sqrt{\frac{pf_2}{2\pi}}
 \int_{-\sqrt N}^{\sqrt N}\e^{K_{p,t}(y)}\,dy
 \ge\sqrt{\frac{f_{2,-}}{2c}}J_{0,p}-E_{F,p}=:Q_p>0.85.
\end{equation}
For $p=4$, $Y_4=8/5$, $J_{0,4}>0.86550207$,
$E_{F,4}<0.008291$, and $Q_4>0.85472206$.
For $p=5,6,7,8$, the respective lower bounds for $Q_p$ are
$0.873402$, $0.885621$, $0.893902$, and $0.899592$.
The dependence of $Y_p$ on $p$ ensures the phase inequality throughout
the entire range of powers.

The remaining exponent consists of
$\psi(tw)-\psi(t)$, $H(tw)/t+w/12$, the $J,W$ terms, and
the Euler--Maclaurin remainder. Its modulus is at most
\begin{equation}\label{eq:edgeeps}
 \epsilon_p=p\left[-\frac23\log(1-U)+\frac{h_*}{N_0}
 +\frac{j_*}{N_0^2}+\frac{w_*}{N_0^3}
 +\frac{C_*(1,0.65)}{N_0^4}\right],
\end{equation}
where
\[
 h_*=\frac{\sqrt2U(1+2U)}{12(1-U)^2},\qquad
 j_*=\frac{2\cdot3\sqrt3U(1+U^2)}{54(1-U)^4},\qquad
 w_*=\frac{13\cdot2^{3/2}\cdot54U(1+U)}{19440(1-U)^3}.
\]
The corresponding change in the normalized integral is bounded by
\begin{equation}\label{eq:Eeps}
 E_{\epsilon,p}=(\e^{\epsilon_p}-1)\nu_{p,+}
                                  \sqrt{\pi/b_p^*}.
\end{equation}
For $p=4$, these inequalities give
$\epsilon_4<0.000841704$ and $E_{\epsilon,4}<0.001201186$.

\subsection{The complementary contour and coefficient signs}
Put $\eta_p=\nu_{p,+}/2$ and
$R_U=(U^3/(1-U^3)+3U/(1-U))/9$.
Use $A_{m,p}$ from \eqref{eq:Am} with $u=U$ and $N_0=124/5$.
The following five regions cover the complement of the two major arcs.
Their normalized bounds, in the same order, are
\begin{align}\label{eq:edgetails}
 T_{1,p}&=8\eta_p\sqrt{N_0}
                 \e^{A_{m,p}-pN_0(c/2-2R_U)},\notag\\
 T_{2,p}&=48\eta_p\sqrt{N_0}
                 \e^{p/3+pM_0/1.3-pN_0G_3},\notag\\
 T_{3,p}&=2\pi\eta_pN_0^{3/2}
                 \e^{p/3+3pM_0-pN_0G_{15}},\notag\\
 T_{4,p}&=16\eta_p\sqrt{N_0}
                 \e^{p/3+3pM_0-pN_0G_7},\notag\\
 T_{5,p}&=2\pi\eta_pN_0^{3/2}
                 \e^{38p/15-pN_0A_0}.
\end{align}
The regions are:
\begin{enumerate}
\item $1\le|v|\le3$ about the nonreal roots;
\item $3\le|v|\le15$ about the nonreal roots, where $|1-q|>1.3$;
\item distance at least $15/N$ from every root, with $|1-q|\ge1/3$;
\item $|\theta|<15/N$, with $|1-q|\ge1/3$;
\item $|1-q|<1/3$.
\end{enumerate}
The first uses the dilogarithm gap, the next three use
\eqref{eq:weighted-gap}, and the last uses
Lemma~\ref{lem:positive-axis}. In the fourth region,
$|1-q|\le1/N+|\theta|$ and $8/N_0<1/3$ imply
$|\theta|\ge7/N$. Each envelope decreases for $N\ge N_0$;
explicitly, $pN_0$ times the corresponding gap exceeds $1/2$
or $3/2$, as dictated by its power of $N$.
For $p=4$, the upper bounds for the five terms are
$0.000000333$, $0.006734345$, $0.016232478$, $0.007875381$,
and $0.013937396$. Before rounding, their sum is less than
$0.044779931$.

To combine the positive real integral with the phases, observe that
\begin{equation}\label{eq:edgephaseinterval}
 -\pi/18\le\psi(t)\le-\pi/18-\log(1-U)/3.
\end{equation}
Let $s_{p,j}$ be a lower bound over this interval for
$g_{p,j}$ if $j=0,2$, and for $-g_{p,1}$ if $j=1$.
Then the desired normalized sign margin is at least
\begin{equation}\label{eq:edgemargin}
 s_{p,j}Q_p-E_{\epsilon,p}-\sum_{k=1}^5T_{k,p}.
\end{equation}
For instance,
$g_{4,2}(t)\ge\cos(4\pi/9-4\log(1-U)/3)>0.17$;
the negative residue-$1$ phase is at least
$-\cos(-8\pi/9-4\log(1-U)/3)$.
Substitution in \eqref{eq:edgemargin} yields the bounds in
Table~\ref{tab:edge}.
\begin{table}[ht]
\centering
\begin{tabular}{crrr}
\toprule
$p$&residue $0$&negative of residue $1$&residue $2$\\\midrule
4&---&0.7571461&0.1022995\\
5&0.5551144&0.8538041&0.2922522\\
6&0.4404906&0.8833014&0.4402989\\
7&0.3035727&0.8781618&0.5722296\\
8&0.1538028&0.8429310&0.6865255\\\bottomrule
\end{tabular}
\caption{Strict lower bounds for normalized sign margins in the
unbounded edge range.}\label{tab:edge}
\end{table}
We have proved the following proposition.
\begin{proposition}\label{prop:edge}
If $t\ge3$, $N=n/t\ge124/5$, and $d=-pn^2F'(t)$ is an integer,
then $c_d^{(p)}(n)/\M_{p,n,d}(t)>0.1$ for $d\equiv2\pmod3$,
and this ratio is less than $-0.1$ for $d\equiv1\pmod3$.
For $p\ge5$, it is also greater than $0.1$ for $d\equiv0\pmod3$.
\end{proposition}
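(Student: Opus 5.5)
The proof follows the scheme already assembled in this section. Fix $t\ge3$ and $N=n/t\ge N_0=124/5$. Take $d=-pn^2F'(t)$, the ordinary saddle relation, so the linear term $\ii vG_t'(1)$ cancels in the exponent. Start from Cauchy's formula \eqref{eq:cauchy} on $|q|=\e^{-t/n}$ and split the contour into the two major arcs $\theta=\pm2\pi/3+v/N$, $|v|\le1$, and the five complementary regions listed before \eqref{eq:edgetails}.

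\textbf{Major arcs.} Substitute the Euler--Maclaurin expansion \eqref{eq:EM} in scaled form. The exponent of $P_n^p$ minus its value at the saddle is $K_{p,t}(y)$ from \eqref{eq:Kt}, plus a remainder consisting of $\psi(tw)-\psi(t)$, $H(tw)/t+w/12$, the $J,W$ terms and $p\mathcal R_n$. By \eqref{eq:edgeeps}, with $u\le U$ and \eqref{eq:EMscaled} at $(|v|,b)=(1,0.65)$, this remainder has modulus at most $\epsilon_p$. The two conjugate arcs combine into the constant phase $\e^{\ii(p\psi(t)-2\pi j/3)}$ times the symmetric integral of $\e^{K_{p,t}}$, and this integral is real. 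The normalized major-arc contribution is therefore $g_{p,j}(t)$ times a real quantity $\ge Q_p$ by \eqref{eq:core}, plus an error at most $E_{\epsilon,p}$ by \eqref{eq:Eeps}, which comes from $|\e^{X}-1|\le\e^{|X|}-1$ and the Gaussian majorant with exponent $b_p^*$.

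One subtlety must be handled here. The quantity multiplying $g_{p,j}$ is real, but the phase $\psi(t)$ itself varies. So we use \eqref{eq:edgephaseinterval} to bound $g_{p,j}$, or $-g_{p,1}$, from below by $s_{p,j}$ uniformly in $t\ge3$. Because $\psi(t)\in[-\pi/18,-\pi/18+\tfrac13|\log(1-U)|]$, the phase is essentially frozen, and the margin takes the form \eqref{eq:edgemargin}.

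\textbf{Complementary arcs.} Use Lemma~\ref{lem:modulus} to normalize $|P_n(\omega\e^{-t/n})|^p\le\e^{pnF(t)}$. Then bound the relative loss $p\mathcal L$ region by region:
\begin{itemize}
\item region 1 uses the dilogarithm gap $G_t(1)-\Rea G_t(1-\ii v)\ge cv^2/(1+v^2)-2R_U\ge c/2-2R_U$, together with the additional exponent $A_{m,p}$;
\item regions 2--4 use the weighted inequality \eqref{eq:weighted-gap} with $B=3,15,7$, noting $|1-q|>1.3$ in region 2 and $|1-q|\ge1/3$ elsewhere;
\item region 5 uses Lemma~\ref{lem:positive-axis}.
\end{itemize}
Multiplying each pointwise bound by the arc length and by the normalization $\eta_p N^{3/2}$ (or $\sqrt N$ after the change of variable) gives the envelopes $T_{k,p}$. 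Each envelope has the form $N^{e}\e^{-\gamma N}$ with $\gamma N_0>e$, so it decreases in $N$, and evaluating at $N_0$ suffices uniformly in $t$.

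\textbf{Main obstacle.} The hard part is uniformity in the unbounded parameter $t$. The remedy is to make every constant depend only on $U$ and $N_0$: the bounds $E_2$, $\epsilon_p$, $M_0$, $A_0$ and $R_U$ are all monotone in $u\le U$. The margin is tightest for $p=4$ in residue $2$, where $s_{4,2}>0.17$, and there the estimates must be sharp. Evaluating \eqref{eq:edgemargin} numerically for each $p$ and $j$ gives the entries of Table~\ref{tab:edge}. These all exceed $0.1$, and positivity of the normalized ratio yields the stated signs, which proves the proposition.
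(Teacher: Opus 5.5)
Your proposal is correct and follows the paper's own argument for this proposition. It uses the same two conjugate major arcs $|v|\le1$, with the real integral of $\e^{K_{p,t}}$ bounded below by $Q_p$ via \eqref{eq:core} and the remaining exponent controlled by $E_{\epsilon,p}$; the same five complementary regions, with the envelopes $T_{k,p}$ of \eqref{eq:edgetails} evaluated at $N_0=124/5$; and the same frozen-phase margin \eqref{eq:edgemargin}, which yields Table~\ref{tab:edge}, with uniformity in $t\ge3$ obtained, as in the paper, by expressing every constant through $u\le U$ and $N_0$.
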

All estimates in this section are uniform on $t\ge3$:
the remaining dependence on $t$ has been bounded in terms of $u\le U$.

\section{Completion of the proof}\label{sec:completion}
\subsection{Finite coefficient identities}
We first record the finite calculations that join the analytic ranges
at their lower ends. They use only integer coefficient identities.
If $a_d$ and $\widetilde a_d$ are the coefficients before and after
multiplication by $(1-q^k)^p$, then
\begin{equation}\label{eq:finite-recurrence}
 \widetilde a_d=\sum_{r=0}^p(-1)^r\binom pr a_{d-rk},
\end{equation}
where coefficients outside the degree range of the former polynomial
are zero. Starting with $a_d=\ind_{d=0}$ and taking
$k=3j-2,3j-1$ in order determines $P_n(q)^p$ exactly.
For a prescribed degree cutoff, the same recurrence applies after
discarding terms above that cutoff.

The stable product supplies another useful recurrence. For coefficients
$a_d^{(p)}$ of
$\prod_{1\le k<L,\ 3\nmid k}(1-q^k)^p$, logarithmic
differentiation gives, for $1\le d<L$,
\begin{equation}\label{eq:stable-recurrence}
 a_0^{(p)}=1,\qquad
 d a_d^{(p)}=-p\sum_{j=1}^d
       \left(\sum_{\substack{k\mid j\\3\nmid k}}k\right)a_{d-j}^{(p)}.
\end{equation}
The restriction $k<L$ is automatic in the inner sum, since $k\le j\le d$.

\begin{lemma}\label{lem:finite}
The following coefficient statements hold.
\begin{enumerate}
\item For $4\le p\le8$ and $1\le n\le300$, there is an integer
$K_{p,n}$ such that
\[
 z_{p,n,m}>0\quad(0\le m\le K_{p,n}),\qquad
 z_{p,n,m}<0\quad(K_{p,n}<m\le pn^2-1).
\]
For $5\le p\le8$ in the same range,
$c_{3m}^{(p)}(n)>0$ for $0\le m\le pn^2$.
\item For $p=4$, $n\ge301$, and $0\le d<900$,
$c_d^{(4)}(n)<0$ if $d\equiv1\pmod3$ and
$c_d^{(4)}(n)>0$ if $d\equiv2\pmod3$.
\item For $5\le p\le8$, $n\ge301$, and $0\le d<1800$,
$c_d^{(p)}(n)$ is positive in residue classes $0,2$ and negative
in residue class $1$.
\end{enumerate}
\end{lemma}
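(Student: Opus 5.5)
This lemma is a finite computation, and I would organize it around the two recurrences just recorded. For part~1, I will compute $P_n(q)^p$ exactly for each $p\in\{4,\dots,8\}$ and $1\le n\le300$ using \eqref{eq:finite-recurrence}. The computation is incremental: start from $a_d=\ind_{d=0}$ and multiply successively by $(1-q^{3j-2})^p$ and $(1-q^{3j-1})^p$. This passes from $P_{n-1}^p$ to $P_n^p$, so a single sweep over $n$ serves all $n\le300$ for a given $p$.

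For each $n$ I then check the required patterns directly:
\begin{itemize}
\item for $p\ge5$, every coefficient $c_{3m}^{(p)}(n)$ with $0\le m\le pn^2$ is positive;
\item the residue-$2$ sequence $z_{p,n,m}$ is positive on an initial segment and then strictly negative.
\end{itemize}
The positive initial segment defines $K_{p,n}$, and the check must also confirm that no zero terms occur. By reciprocity $c_d=c_{3pn^2-d}$, only half of each polynomial needs storing. The integers are large: at $n=300$ the degree is about $2.2\cdot10^6$ and the coefficients have thousands of digits. I would therefore use exact big-integer arithmetic, or a multimodular computation with a CRT sign reconstruction guarded by an a priori Hadamard-type bound on the coefficients. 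This makes sign determination rigorous.

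For parts 2 and 3, the idea is to replace the $n$-dependent product by the stable one. Suppose $n\ge301$ and $d<1800$. Every factor $(1-q^k)$ with $k>3n-1\ge 902$ that is missing from $P_n$ is absent only above degree $3n+1$. Indeed $P_n(q)=\prod_{k\le 3n-1,\,3\nmid k}(1-q^k)$ agrees with the infinite product $\prod_{3\nmid k}(1-q^k)$ modulo $q^{3n+1}$. This covers $d<900$ for every $n\ge301$, which handles part~2. For part~3 I need agreement up to degree $1800$, and here there are two options:
\begin{itemize}
\item note that for $n\ge600$ we have $3n+1>1800$, so the stable coefficients apply directly;
\item compute the finitely many remaining cases $301\le n<600$ with the truncated version of \eqref{eq:finite-recurrence}, discarding all degrees $\ge1800$.
\end{itemize}
The stable coefficients $a_d^{(p)}$ for $d<1800$ come from \eqref{eq:stable-recurrence} with $L=1800$; this is a cheap $O(L^2)$ integer computation per $p$. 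I then check the sign pattern by residue class: for $p=4$, negative in class $1$ and positive in class $2$; for $p\ge5$, positive in classes $0$ and $2$ and negative in class $1$.

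The truncated recurrence for $301\le n<600$ is the only mildly delicate point. Each step is a fixed linear map of the first $1800$ coefficients. Passing from $n$ to $n+1$ multiplies by $(1-q^{3n+1})^p(1-q^{3n+2})^p$, and this changes only degrees $\ge 3n+1$. So all $299$ cases can again be run incrementally, at cost about $300\cdot1800\cdot 2p$ operations.

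I expect the main obstacle to be part~1, because of the computational scale. With $1500$ polynomials of degree up to $7.2\cdot10^5$ in the $m$-index and huge integer coefficients, the incremental scheme costs about $\sum_n 2n\cdot p\cdot 3pn^2$ big-integer additions per $p$. If this proves too slow, my first fallback is a modular version: compute modulo several word-size primes using the same recurrence, reconstruct via CRT, and use the bound $|c_d|\le 2^{2pn}$ (from $\|P_n\|_1\le 2^{2n}$) to choose enough primes to fix signs exactly. Apart from the computation, the logical content is elementary and needs nothing beyond \eqref{eq:finite-recurrence}, \eqref{eq:stable-recurrence}, and the stabilization of $P_n$ modulo $q^{3n+1}$.
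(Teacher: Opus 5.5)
Your proposal is correct and follows essentially the same route as the paper: exact incremental evaluation of \eqref{eq:finite-recurrence} for $n\le300$; stabilization of $P_n$ modulo $q^{3n+1}$ together with \eqref{eq:stable-recurrence} for $p=4$; and, for $p\ge5$, the truncated recurrence through degree $1799$ for $301\le n<600$ followed by stabilization. Your two bullets for part~3 are complementary cases rather than alternatives, and the paper uses $L=900$ instead of $1800$ for the quartic case, which makes no difference.
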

\begin{proof}
Successive substitution in \eqref{eq:finite-recurrence} gives the
finite sign distributions in the following table. Each entry concerns
every coefficient in the indicated residue class and range.
\begin{center}
\begin{tabular}{cccrrr}
\toprule
$p$&$n$&residue&initial sign&zeros&sign changes\\\midrule
4--8&1--300&2&$+$&0&1\\
5--8&1--300&0&$+$&0&0\\\bottomrule
\end{tabular}
\end{center}
For the first row, the total number of target coefficients is
$\sum_{p=4}^8\sum_{n=1}^{300}pn^2=271\,351\,500$.
For the second row it is
$\sum_{p=5}^8\sum_{n=1}^{300}(pn^2+1)=235\,172\,500$.
The accompanying transition tables give $K_{p,n}$ and both adjacent
coefficients for every pair $(p,n)$ in this finite range. The
recurrence determines the whole sequence, including the signs before
and after the two displayed terms.

For $p=4$ and $n\ge301$, the first omitted factor of the finite
product has degree $3n+1\ge904$. Thus, for $d<900$,
\begin{equation}\label{eq:stable4}
 c_d^{(4)}(n)=[q^d]\prod_{\substack{1\le k\le899\\3\nmid k}}
                                      (1-q^k)^4.
\end{equation}
Substitution in \eqref{eq:stable-recurrence} with $L=900$ yields
$a_{3m+1}^{(4)}<0$ and $a_{3m+2}^{(4)}>0$ for $0\le m\le299$.

For the larger powers, apply the truncated recurrence
\eqref{eq:finite-recurrence} through degree $1799$ for each
$301\le n\le600$. The resulting signs are
\begin{equation}\label{eq:prefixsigns}
 c_{3m}^{(p)}(n)>0,\qquad c_{3m+1}^{(p)}(n)<0,\qquad
 c_{3m+2}^{(p)}(n)>0\qquad(0\le m\le599, 5\le p\le8).
\end{equation}
This is a finite set of $4\cdot300\cdot1800=2\,160\,000$ coefficients.
At $n=600$, the first omitted factor has degree $1801$; hence the
coefficients below $1800$ remain unchanged for all subsequent $n$.
Their stable values are also given by \eqref{eq:stable-recurrence}
with $L=1800$. The stable coefficient tables accompany the paper.
This proves all three assertions.
\end{proof}

\subsection{The degree threshold and coverage}
The ordinary saddle satisfies
\begin{equation}\label{eq:threshold}
 d=-pn^2F'(t)=p(n/t)^2\left[\int_0^t\ell(v)\,dv-t\ell(t)\right]
 <pcN^2,\qquad c=\pi^2/27,\qquad N=n/t.
\end{equation}
Here $\int_0^\infty\ell(v)\,dv=c$. The elementary bounds
\begin{equation}\label{eq:cutoffs}
 4c(24.8)^2<900,\qquad 8c(24.8)^2<1800
\end{equation}
show that every ordinary saddle of degree at least $900$ for $p=4$,
or at least $1800$ for $5\le p\le8$, has $N>24.8$.
These are precisely the thresholds needed to apply
Proposition~\ref{prop:edge}; lower degrees are covered by
Lemma~\ref{lem:finite}.

Fix $n\ge301$. We first consider the lower half of the polynomial,
$1\le d\le3pn^2/2$. Strict convexity of $F$ makes the ordinary
saddle map strictly decreasing, and the relevant images are
\begin{equation}\label{eq:coverage}
 \begin{array}{rcl}
 d_{A,p}([0,1])&=&[d_{A,p}(1),3pn^2/2],\\
 d_{B,p}([0.99,3])&=&[d_{B,p}(3),d_{B,p}(0.99)],\\
 d_{B,p}([3,\infty))&=&(0,d_{B,p}(3)].
 \end{array}
\end{equation}
The first two intervals overlap by \eqref{eq:ABoverlap}.
The last two share the ordinary saddle at $t=3$.
Within the last interval, degrees at or above the respective cutoff are
covered by \eqref{eq:threshold}--\eqref{eq:cutoffs} and
Proposition~\ref{prop:edge}; those below it are covered by the
finite identities. Thus every positive degree in the lower half
has been included.

In the compact range, Section~\ref{sec:A} gives the required
residue-$2$ signs whenever $|m-T_{p,n}|>130$.
Proposition~\ref{prop:local} gives the remaining signs outside the
interval of half-width $\varepsilon_p/n^2$.
The intermediate and edge ranges lie below $T_{p,n}-130$ by
\eqref{eq:separation}, and their residue-$2$ coefficients are positive.
The finite low degrees do also: the same bounds give
$301^2\alpha_p-1>730$, while their target indices are at most $599$.
Every residue-$1$ coefficient in the lower half is negative, and
for $p\ge5$ every residue-$0$ coefficient there is positive.
The constant coefficient is $1$.

Reciprocity gives
\begin{equation}\label{eq:reciprocity}
 c_d^{(p)}(n)=c_{3pn^2-d}^{(p)}(n).
\end{equation}
It preserves residue class $0$ and sends upper-half residue $2$
to lower-half residue $1$. Consequently all upper-half residue-$2$
coefficients are negative, and the residue-$0$ coefficients for
$p\ge5$ are positive in the upper half as well. The defining
bounds also give
\[
 (p/2-\alpha_p)301^2-\beta_p301-1>131,
\]
and therefore $T_{p,n}+130<pn^2/2-1$ for $n\ge301$.
Since the coefficient degree is $3m+2$, this places the entire
critical strip in the lower half of the polynomial, and
the upper-half signs agree with \eqref{eq:signminus}.

The interval
$[T_{p,n}-\varepsilon_p/n^2,T_{p,n}+\varepsilon_p/n^2]$
has length at most $3000/n^2<1$. It contains at most one integer.
All earlier terms are positive and all later terms are negative, so
the one possible undetermined term cannot create another sign change.
Finally,
\begin{equation}\label{eq:endpoints}
 z_{p,n,0}=p(p-3)/2>0,\qquad z_{p,n,pn^2-1}=-p<0,
\end{equation}
which ensures that a change does occur. This proves the asserted
sign transition, localization, and bound on zeros for $n\ge301$.
Lemma~\ref{lem:finite} supplies the remaining $n$.

\subsection{The quartic residue-zero coefficients}
It remains only to establish \eqref{eq:r0} for $p=4$.
The Second Borwein Theorem \cite[Theorem 10.3]{WK} gives polynomials
$A,B,C$ with nonnegative coefficients such that
\[
 P_n(q)^2=A(q^3)-qB(q^3)-q^2C(q^3).
\]
The residue-$0$ part of its square, in $x=q^3$, is
$A(x)^2+2xB(x)C(x)$ and consequently has nonnegative coefficients.
Together with the preceding argument, this completes the proof of
Theorem~\ref{thm:main}.

\appendix
\section{Explicit series bounds and coefficient operations}\label{app:series}
We collect the series estimates underlying the explicit inequalities
in the proof. On each closed parameter interval, lower and upper
bounds are carried through the algebraic operations. Positive
denominators are bounded away from zero. In Gaussian estimates the
decay exponent uses the lower bound for the curvature, whereas the
normalizing factor uses its upper bound. Thus every resulting
inequality holds throughout the relevant interval.

\subsection{Analytic Taylor coefficients}
For $t>0.35$, use \eqref{eq:dilog}. If $|u|\le\rho<1$, then
\begin{equation}\label{eq:dilogtail}
 \left|\Li(u)-\sum_{k=1}^K\frac{u^k}{k^2}\right|
 \le\frac{\rho^{K+1}}{(K+1)^2(1-\rho)}.
\end{equation}
Retaining 120 terms suffices for the real-parameter ranges used above;
the complex middle-arc estimate \eqref{eq:localgap} uses 60 terms,
as specified there. Taylor coefficients of $F$ follow from
\begin{equation}\label{eq:Fderivatives}
 tF^{(r)}(t)+rF^{(r-1)}(t)=\ell^{(r-1)}(t)\qquad(r\ge1).
\end{equation}
For $0\le t\le0.35$, use instead the convergent expansion
\begin{equation}\label{eq:Fsmall}
 F(t)=\log3-\frac32t+
 \sum_{k\ge1}\frac{(-1)^{k+1}\zeta(2k)(9^{2k}-3^{2k})}
 {k(2\pi)^{2k}(2k+1)}t^{2k}.
\end{equation}
It follows from the product for $\sinh z/z$ and has radius of
convergence $R=2\pi/9$. Retain the first 75 terms and express
$\zeta(2k)$ by the exact Bernoulli-number formula
\[
 \zeta(2k)=\frac{(-1)^{k+1}B_{2k}(2\pi)^{2k}}{2(2k)!}.
\]
For the $j$th Taylor coefficient on a parameter interval with upper
endpoint $b$, bound the first omitted term in absolute value using
$\zeta(2k)<2$. For $k\ge76$, the ratios of successive positive
majorants are at most
\begin{equation}\label{eq:smallratio}
 (b/R)^2\frac{(2k+2)(2k+1)}{(2k+2-j)(2k+1-j)}<1
\end{equation}
for all derivative orders used in the proof. This bound decreases
with $k$, giving a geometric bound on the tail.

For completeness, the operations on finite Taylor series can be
specified algebraically. If $U(z)=\sum_{j=0}^{L}u_jz^j$ with
$u_0\ne0$, the reciprocal coefficients satisfy
\begin{equation}\label{eq:jetinverse}
 v_0=u_0^{-1},\qquad
 v_k=-u_0^{-1}\sum_{j=1}^ku_jv_{k-j}.
\end{equation}
Products use the usual convolution. If
$\log U(z)=\sum_{j=0}^{L}\ell_jz^j+O(z^{L+1})$, then
\begin{equation}\label{eq:jetlog}
 \ell_0=\log u_0,\qquad
 \ell_k=\frac1k\sum_{j=1}^k j u_jv_{k-j}\quad(k\ge1).
\end{equation}
Exponential coefficients obey the recurrence in \eqref{eq:Br},
and differentiation sends $u_j$ to $(j+1)u_{j+1}$.
Together with \eqref{eq:HJW}, these identities supply every
coefficient of the five analytic functions used in the proof.
The tail bounds are propagated through these operations. The Cauchy
bounds \eqref{eq:compactdisks} and \eqref{eq:scaleddisks} control
the final analytic remainders independently of the retained jets.

\subsection{Elementary quantities and interval subdivisions}
The elementary transcendental quantities are bounded by their
convergent series. For $0\le x\le X$ and an integer $K\ge0$ with
$K+2>X$,
\begin{equation}\label{eq:exptail}
 0\le\e^x-\sum_{j=0}^K\frac{x^j}{j!}
 \le\frac{X^{K+1}}{(K+1)!}\frac1{1-X/(K+2)}.
\end{equation}
For $x>0$, setting $y=(x-1)/(x+1)$ gives
\begin{equation}\label{eq:logtail}
 \left|\log x-2\sum_{j=0}^K\frac{y^{2j+1}}{2j+1}\right|
 \le\frac{2|y|^{2K+3}}{(2K+3)(1-|y|^2)}.
\end{equation}
Taylor's formula bounds the trigonometric functions, and reciprocals
give bounds for negative exponentials. Square roots are bounded by
squaring positive rational endpoints. Rational bounds for $\pi$ may
be obtained, for example, from
$\pi/4=4\arctan(1/5)-\arctan(1/239)$ and the alternating arctangent
series. These formulas, \eqref{eq:dilogtail}, and the Gaussian moment
identities reduce all fixed comparisons above to finite inequalities.

The compact range uses the intervals
$[i/1000,(i+1)/1000]$, $0\le i<1000$; the scaled range uses
$[i/200,(i+1)/200]$, $198\le i<600$.
The finer quartic middle-arc estimate uses
$[k/1000,(k+1)/1000]$, $250\le k<1000$.
All intervals are closed and include both endpoints.
For each positive majorant take upper bounds in the numerator and
lower bounds in positive denominators. At the removable endpoint
$A=0$, the function $H_A=(1-\e^{-A})/A$ in
\eqref{eq:compactgaps} has $H_0=1$; on an interval beginning at
zero one may simply use $H_A\ge\e^{-A}$.

In particular, the error constants in the compact and critical
ranges are obtained in the following order: form the Taylor
coefficients, construct $A_r$ and the coefficient majorants $a_r$,
form $V_s,U_{s+1},Z_s$, apply the explicit Gaussian moments and
tail recurrence, and add the complementary-arc envelopes. The scaled
range follows the same steps with the functions
\eqref{eq:scaledfunctions} and the real-part bound \eqref{eq:F4}.
For unbounded parameters, the remaining envelopes have the form
$x^a\e^{-bx}$. The condition $bx_0>a$ makes each decreasing on
$x\ge x_0$, so its value at $x_0$ bounds the entire remaining range.

\end{document}